\newtheorem{theorem}{Theorem}[section]
\newtheorem{proposition}[theorem]{Proposition}
\newtheorem{lemma}[theorem]{Lemma}
\newtheorem{claim}[]{Claim}
\theoremstyle{definition}
\newtheorem{definition}[theorem]{Definition}
\theoremstyle{remark}
\newtheorem{remark}[theorem]{Remark}
\numberwithin{equation}{section}
\newcommand{\mf}{\mathbf}
\newcommand{\mb}{\mathbb}
\newcommand{\mc}{\mathcal}
\newcommand{\mk}{\mathfrak}
\newcommand{\mr}{\mathrm}
\newcommand{\eps}{\varepsilon}
\newcommand{\wti}{\widetilde}
\newcommand{\RR}{\mathbb R}
\newcommand{\Area}{\mathrm{Area}}
\newcommand{\diam}{\mathrm{diam}}
\newcommand{\Om}{\Omega}
\newcommand{\dist}{\operatorname{dist}}
\title[waist-width in non-compact 3-manifolds]{On the waist and width inequality in complete 3-manifolds with positive scalar curvature}
\date{\today}
\author{Yevgeny Liokumovich}
\address{Department of Mathematics, University of Toronto, 40 St George
	Street, Toronto, ON M5S 2E4, Canada}
\email{ylio@math.toronto.edu}
\author{Zhichao Wang}
\address{Department of Mathematics, University of British Columbia, Vancouver, BC V6T 1Z2, Canada}
\email{zhichao@math.ubc.ca}
\begin{document}

\begin{abstract}
We show that a complete non-compact 3-manifold with scalar curvature bounded below by a positive constant admits a singular foliation
by surfaces of controlled area and diameter.
\end{abstract}

\maketitle


\section{Introduction}

In \cite{Gro20}*{\S 3.10, Width/Waist Conjecture} Gromov conjectured that a complete manifold with scalar curvature $R \geq 6$ admits a singular foliation by surfaces of area and diameter
bounded by a universal constant. This conjecture was proved for compact 3-manifolds in \cite{LM20}. In this paper the result is generalized to complete non-compact 3-manifolds.

\begin{theorem} \label{main}
Let $M$ be a complete non-compact 3-manifold with scalar curvature $R \geq 6$. Then there exists a proper Morse function
$f: M \rightarrow \RR$, such that for every $t\in \mb R$ each connected component $\Gamma$ of $f^{-1}(t)$ satisfies
    \begin{gather*}
    \diam_M(\Gamma) < 18 \pi;\\
    \Area(\Gamma) \leq 33 \pi.
    \end{gather*}
\end{theorem}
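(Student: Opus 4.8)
I would prove Theorem~\ref{main} by reducing it to a local construction and then assembling the local pieces into a global proper Morse function along an exhaustion of $M$, following the compact case of \cite{LM20} but adding the bookkeeping needed for non-compactness and properness. The local input is a \emph{small separating surface}: any relatively compact region of $M$ through which the distance function varies by more than a fixed universal amount $w_0$ can be cut by an embedded surface whose components are spheres or projective planes of intrinsic area $\le \tfrac{4\pi}{3}+o(1)$ and intrinsic diameter $\le D_0$, for a universal $D_0$. The bound $\diam_M(\Gamma)<18\pi$ will then come from arranging that each level set component of $f$ lies in a metric ball of $M$ of radius $<9\pi$, and the bound $\Area(\Gamma)\le 33\pi$ from each component being a bounded union of these small surfaces together with bounded graphical collars.

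\textbf{The building block ($\mu$-bubbles).} Given a relatively compact $\Omega\subset M$ with smooth boundary split into ``inner'' and ``outer'' faces $\partial_\pm\Omega$, I would introduce a weight $h\in C^\infty(\Omega)$ built from the signed distance to a reference hypersurface, with $h\to+\infty$ near $\partial_+\Omega$, $h\to-\infty$ near $\partial_-\Omega$, and $\tfrac12 h^2+|\nabla h|$ small compared with $\tfrac12 R\ge 3$; such an $h$ exists on a sub-band of width $\approx w_0$ whenever $\dist(\partial_-\Omega,\partial_+\Omega)>w_0$ (the band-width inequality, cf.\ \cite{Gro20}). Minimizing the $\mu$-bubble functional $\Area(\Sigma')-\int_{\Omega'}h\,d\mathcal{H}^3$ over Caccioppoli sets $\Omega'$ interpolating $\partial_\pm\Omega$ produces a smooth stable $\mu$-bubble $\Sigma$ of prescribed mean curvature $h|_\Sigma$ lying in the interior of $\Omega$ --- the blow-up of $h$ supplies the barriers keeping $\Sigma$ off $\partial\Omega$, which is exactly what makes the variational problem well posed even when $M$ is non-compact. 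Combining the stability inequality with the Gauss equation and $R\ge6$ gives $\lambda_1(-\Delta_\Sigma+K_\Sigma)\ge 3-o(1)$ on each component; hence each component is a sphere or $\mathbb{RP}^2$ with $\Area\le\tfrac{4\pi}{3}+o(1)$, and a standard secondary argument (a one-dimensional sweepout of $\Sigma$ by short curves using the same spectral bound together with $\int_\Sigma K\le 4\pi$) bounds its intrinsic, hence extrinsic, diameter by $D_0$. I would also record the minor variant near Morse critical points, where the region is a ball and $\Sigma$ is a small sphere.

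\textbf{Exhaustion and globalization.} Fix $p_0\in M$ and an exhaustion $\Omega_1\subset\Omega_2\subset\cdots$ by relatively compact domains with smooth boundaries and $\bigcup_j\Omega_j=M$. Iterating the building block inside the shells $\Omega_{j+1}\setminus\Omega_j$ --- cutting any sub-region that is too wide, always along surfaces chosen so that the resulting cuts do not cross --- decomposes $M$ into a locally finite, \emph{nested} family of chunks: each chunk is a band $N\times[0,w_0]$ over a disjoint union $N$ of the small surfaces, or a neighbourhood of one critical point, and adjacent chunks meet exactly along one of the small separating surfaces, with band coordinates matching along common faces. This is precisely the structure that lets one patch the local band- and Morse-chart coordinates (via partitions of unity and Morse-function surgery along the small common faces) into a global Morse function $f$ whose regular level set \emph{components} are, up to small graphical perturbations, a bounded union of the small surfaces --- so each component lies in a bounded number of adjacent chunks, hence in a ball of radius $<9\pi$, giving $\diam_M(\Gamma)<18\pi$, and has area at most $33\pi$. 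Rescaling the $f$-values so that $|f|$ tends to $\infty$ along the ends of $M$ makes $f$ proper.

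\textbf{Main obstacle.} The heart of the argument is the globalization and its quantitative bookkeeping: one must guarantee simultaneously that $f$ is smooth, Morse, and \emph{proper} --- so the chunk decomposition has to be genuinely locally finite and cannot pile up infinitely many chunks over a bounded subset or over a single level --- and that the number of chunks meeting a given level set component, and the total area and extrinsic diameter accumulated through the patching and Morse surgeries, stay below the universal thresholds. The gap between the $\sim\!\tfrac{4\pi}{3}$ area and $\sim\! w_0$ width of a single building block and the stated $33\pi$ and $18\pi$ is exactly the room needed to absorb the bounded overlaps and surgery collars; keeping those multiplicities bounded \emph{uniformly in $j$ and independently of the geometry of $M$ near infinity}, while still producing a single globally consistent foliation, is the step that needs genuinely new care beyond the compact case of \cite{LM20}.
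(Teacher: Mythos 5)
There is a genuine gap, and it is in the step you yourself flag as the heart of the matter: the claim that the decomposition by $\mu$-bubbles yields chunks that are ``bands $N\times[0,w_0]$ over a disjoint union $N$ of the small surfaces, or a neighbourhood of one critical point.'' That structure is what one gets \emph{after} one has a Morse function; it is not something the $\mu$-bubble cuts produce for free. In general the region between two consecutive $\mu$-bubbles is a compact $3$-manifold with boundary of quite arbitrary topology, and nothing in the construction says it is a product or a handle neighbourhood. Producing a Morse foliation of such a region with an area bound on \emph{every} level set is precisely the hard content, and ``patching local band- and Morse-chart coordinates via partitions of unity'' does not supply it: a partition-of-unity interpolation between two Morse charts can create fibers of uncontrolled area. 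The paper instead foliates each piece by running mean curvature flow with surgery from a mean convex boundary component and then repairing the level sets to those of a Morse function, which is the actual mechanism that transports the area bound from $\partial\Omega$ to the interior fibers (Lemma~\ref{lem:admissible foliation2}, following \cite{LM20} and \cite{ChLi20}). Your proposal has no analogue of this step.

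A second, related omission is the preliminary decomposition along closed minimal surfaces of index at most $1$ into \emph{geometrically prime} regions (Theorem~\ref{prime decomposition}). This is not bookkeeping: it is what guarantees (i) that the constrained area minimizers used to build separating surfaces actually touch the obstacle rather than converging to a closed interior minimal surface (Claim~\ref{claim:minimizer intersects Omega}), (ii) the Frankel-type property (Lemma~\ref{lem:frankel}) and the separation property (Remark~\ref{rmk:two-sided separation}) used repeatedly to keep the cuts connected and two-sided, and (iii) that MCF with surgery starting on a mean convex boundary component terminates in finite time on a point rather than converging to an interior minimal surface. Your construction cuts directly in shells of an arbitrary exhaustion without first removing these minimal surfaces, so each of these steps would fail in general. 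Finally, even the input numerology of your building block is not what one needs: a stable $\mu$-bubble gives components with area $<2\pi$ and small diameter, but one also needs mean convex surfaces of area $<\tfrac{8\pi}{3}$ at controlled distance from a given boundary (Lemma~\ref{lem:improved song}), produced via the Gromov--Song dichotomy (Proposition~\ref{prop:index one}) combined with a constrained minimization, in order to seed the MCF sweepouts. The paper's $33\pi$ and $18\pi$ come from the explicit bounds $\Area(\partial\Omega)\le\tfrac{32\pi}{3}$ and $\diam_N(\Omega)<18\pi$ for admissible regions together with the factor $3$ from the surgery sweepout, not from a generic ``bounded multiplicity'' argument, and it is not clear your scheme could recover them.
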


For compact manifolds it was proved in \cite{LM20} that one can construct a foliation by surfaces of 
controlled area and diameter and genus at most $2$ (the maximal Heegaard genus of a compact 
3-manifold with positive scalar curvature). We conjecture that it should also be possible to prove a similar genus bound in Theorem \ref{main}.

\medskip
Recall that the {\em Urysohn $1$-width} of a Riemannian three-manifold $M$ is the infimum of real numbers $d\geq 0$ with the property that there exist a Morse function $f:M\to \mb R$ such that for all $t\in \mb R$, each connected component of $f^{-1}(t)$ has diameter bounded by $d$ from above. For three-manifolds with $R\geq 6$ and zero first Betti number, Gromov-Lawson \cite{Gromov-Lawson-diameter} proved an upper bound for Urysohn 1-width by considering  level sets of the distance function to a fixed point; in \cite{Gro20}*{\S 3.10, Property A}, Gromov proved the Urysohn 1-width upper bound without the topological conditions; in \cite{CL20} Chodosh and Li proved the Urysohn 1-width upper bound for $3$-dimensional mu-bubbles in $4$-manifolds with positive scalar curvature. For closed manifolds, the work of Maximo and the first author \cite{LM20} implies a new proof of the Urysohn 1-width bound. Inspired by their work, the second author with Zhu proved the Urysohn 1-width upper bound for mean convex domains (possibly non-compact) with non-negative Ricci curvature in \cite{Wang-Zhu-Urysohn}. Urysohn width bounds were used in \cite{CCL23} to obtain topological classification results for four- and five-manifolds with positive scalar curvature. As a direct corollary, Theorem \ref{main} gives a new proof of the Urysohn 1-width upper bound of three-manifolds with $R\geq 6$.

\subsection*{Idea of the proof}
Our proof relies on methods from \cite{LM20}, construction of $\mu$-bubbles and ideas of Gromov and Song
on construction of minimal hypersurfaces in non-compact manifolds (\cite{Gromov2014}, \cite{Song19}, see also \cite{ChLi20}).

We first cut the complete three-manifold $(M,g)$ along minimal surfaces with index less than or equal to 1. As a result, if these surfaces are countable (e.g. the metric $g$ is bumpy), then such a cutting process decomposes $M$ into countably many regions. Then by adapting an argument of Gromov \cite{Gromov2014} and Song \cite{Song19},
the interior of these regions does not contain any closed minimal surfaces. These regions are said to be {\em geometrically prime}, a generalization of a similar notion in the compact case \cite{LM20}.

\medskip
The key step is to decompose the geometrically prime regions into countably many small pieces with diameter and boundary area upper bounds; see Definition \ref{def:admissible regions} for the concept of {\em admissible regions}. 

For prime regions with an unstable boundary component, we will use $\mu$-bubble techniques and constrained minimization to construct finitely many surfaces with controlled size to decompose it as a union of an admissible region and finitely many regions with exactly one mean convex boundary component; see Lemma \ref{lem:convex extension}. By repeating this argument, we obtain the desired decomposition for geometrically prime regions with unstable boundary components.

For those geometrically prime regions whose boundaries are stable minimal surfaces (or the boundary is empty) we use arguments in \citelist{\cite{Gromov2014} \cite{Song19}} to obtain a two-sided surface with controlled size and non-vanishing mean curvature. Then for the mean convex region, the argument in the last paragraph gives the desired decomposition. For the mean concave region, we cut off an admissible region so that its complement consists of finitely many connected regions with the property that their boundary contains exactly one non-minimal component (the others are stable minimal surfaces). Moreover, each non-minimal boundary component has controlled size and non-vanishing mean curvature. Observe that we have dealt with these cases. Thus, by repeating the process, we obtain the desired decomposition.

\medskip
It remains to construct a Morse foliation for each admissible regions. To do this, we regard the region as a subdomain in geometrically  prime regions and then consider mean curvature flow starting on the mean convex surfaces. Note that the diameter upper bound may not be preserved in the flow.
To obtain the diameter bound we intersect level sets of the flow with admissible regions and make necessary surgeries
so that they have regularity of level sets of a Morse function, similarly to how it was done in \cite{LM20} and \cite{ChLi20}. Combining all of the foliations together finishes the proof.

\subsection*{Outline}
This paper is organized as follows.
In Section \ref{sec:preliminary}, we present some known results.
We first introduce the bumpy metric theorem in Section \ref{subsec:bumpy metric}; the area and diameter bounds for minimal surfaces with index less than or equal to one and $\mu$-bubbles are provided in Section \ref{subsec:surfaces with bounded size}. Finally, in Section \ref{subsec:geometrically prime}, we introduce the concept of geometrically prime regions and a dichotomy proposition by adapting arguments from \citelist{\cite{Gromov2014} \cite{Song19}}. Section \ref{sec:decomposition} is devoted to three decomposition lemmas. They are used to decompose geometrically prime regions into  compact domains of controlled diameter and boundary area.
In Section \ref{sec:foliation}, we construct foliation in each compact region and prove our main theorem.

\subsection*{Acknowledgement}  Y.L. would like to thank Davi Maximo for many useful conversations. Y.L. was partially supported by NSERC Discovery Grant and Sloan Fellowship.
Z.W. would like to thank Professor Jingyi Chen and Professor Ailana Fraser for their support and encouragement.
\section{Preliminary}\label{sec:preliminary}

\subsection{Bumpy metrics}\label{subsec:bumpy metric}
Let $M$ be a complete (possibly non-compact) Riemannian manifold. We say that
metric $g$ on $M$ is bumpy if there are no closed minimal submanifolds with non-zero Jacobi fields. By \cite{WhiteBumpy17}*{Theorem 2.1} (see also \cite{WhiteBumpy91}*{Theorem 2.1}) for any metric $g$ on $M$ there exists a sequence $g_i$ of smooth bumpy metrics on $M$ with $|g-g_i|_{C^3} \rightarrow 0$. Hence, without any loss of generality we can assume that our metric is bumpy. 
We will also use the bumpy metric theorem with partially fixed metrics.
\begin{theorem}[\cite{WhiteBumpy17}*{Theorem 3.1}]\label{thm:bumpy}
Let $(N,\gamma_0)$ be a complete Riemannian manifold. Let $U\subset N$ be an open set. Consider the class $\Gamma$ of smooth metrics $\gamma'$ on $N$ such that $\gamma'$ and $\gamma_0$ agree on $N\setminus U$. Then a generic metric $\gamma\in \Gamma$ has the following property: if $\Sigma$ is a closed, minimal immersed submanifold of $N$ and if each connected component of $\Sigma$ intersects $U$, then $\Sigma$ has no nontrivial Jacobi fields.
\end{theorem}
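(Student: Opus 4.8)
\emph{Proof sketch.} This is a special case of White's bumpy-metric theorem, and the natural route is the Sard--Smale transversality scheme, run with metric perturbations supported in $U$. Organize closed minimal immersed submanifolds by \emph{type}: an abstract compact smooth manifold $P$, a multiplicity $m\in\mathbb{Z}_{>0}$, and (in higher codimension, or for one-sided submanifolds) the discrete normal-bundle/orientation data. Fix a compact exhaustion $N_1\Subset N_2\Subset\cdots$ of $N$, and for a fixed type work in the $C^{k,\alpha}$ category: let $\mathcal{G}$ be the Banach manifold of $C^{k,\alpha}$ metrics on $N$ agreeing with $\gamma_0$ on $N\setminus U$, and let $\mathcal{F}$ be the Banach manifold of $C^{k,\alpha}$ immersions $F\colon P\to N_j$, charted by normal graphs over a fixed immersion modulo reparametrization. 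Consider the universal moduli space
\[
\mathcal{Z}=\bigl\{(F,\gamma)\in\mathcal{F}\times\mathcal{G}\;:\;F\text{ is }\gamma\text{-minimal, and each component of }F(P)\text{ meets }U\bigr\}.
\]

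The key step is to prove $\mathcal{Z}$ is a Banach submanifold, and this is exactly where the hypothesis that every component of $\Sigma$ meets $U$ is used. Write the minimal-surface equation near $(F,\gamma)$ as $\mathcal{H}(F,\gamma)=0$. The $F$-linearization is the Jacobi operator $L_\Sigma$ (for a two-sided hypersurface, $L_\Sigma=\Delta_\Sigma+|A|^2+\Ric(\nu,\nu)$; in general a self-adjoint elliptic system on the compact domain), so it is Fredholm of index $0$ with $\operatorname{coker}L_\Sigma\cong\ker L_\Sigma$, the space of Jacobi fields. The $\gamma$-linearization sends an infinitesimal metric change $h$ to the resulting infinitesimal change of mean curvature along $\Sigma$, a first-order linear expression supported in $\Sigma\cap\operatorname{supp}h$. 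Surjectivity of the total linearization thus reduces to: if a Jacobi field $u$ is $L^2$-orthogonal to the mean-curvature variation produced by every $h$ supported in a small ball $B\subset U$ meeting $\Sigma$, then $u$ vanishes on $\Sigma\cap B$. This follows because the admissible perturbations $h$ near a point of $\Sigma\cap U$ already span enough normal first-variations (an explicit computation, or White's abstract version). Unique continuation for the second-order elliptic equation $L_\Sigma u=0$ then forces $u\equiv 0$ on the component of $\Sigma$ meeting $B$; since every component meets $U$, we conclude $u\equiv 0$. Hence the total linearization is onto and, by the implicit function theorem, $\mathcal{Z}$ is a Banach manifold.

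Now the projection $\pi\colon\mathcal{Z}\to\mathcal{G}$, $(F,\gamma)\mapsto\gamma$, is smooth and Fredholm of index $0$, its differential at $(F,\gamma)$ having kernel and cokernel canonically isomorphic to those of $L_\Sigma$. By the Sard--Smale theorem the regular values of $\pi$ form a residual subset of $\mathcal{G}$, and — since the index is $0$ — $\gamma$ is a regular value precisely when every $\gamma$-minimal immersion of this type with image in $N_j$ and each component meeting $U$ has $L_\Sigma$ invertible, i.e.\ carries no nontrivial Jacobi field. A closed minimal submanifold is compact, hence lies in some $N_j$, and only countably many types are relevant; intersecting the corresponding countably many residual sets gives a residual set of $C^{k,\alpha}$ metrics with the stated property. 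Finally, bridging to the smooth class $\Gamma$ is White's standard argument: intersect over $k\to\infty$ and invoke the lemma that a family of smooth metrics residual in every $C^{k,\alpha}$ completion is residual in the $C^\infty$ topology. I expect the transversality step — that metric variations confined to $U$ already annihilate every Jacobi field, which is exactly the point the ``$\Sigma$ meets $U$'' hypothesis is engineered to secure — to be the only real obstacle; the Fredholm bookkeeping, Sard--Smale, exhaustion, and Baire-category steps are routine.
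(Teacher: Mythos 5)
The paper does not prove this statement; it is quoted verbatim from White's bumpy-metric paper \cite{WhiteBumpy17}*{Theorem 3.1} and used as a black box, so there is no in-paper proof to compare against. Your sketch correctly reproduces the broad architecture of White's argument: the universal moduli space of (type, metric) pairs, the Fredholm index-$0$ projection to the space of metrics, Sard--Smale, compact exhaustion plus countably many types, and the passage from $C^{k,\alpha}$ to $C^\infty$ via a Baire/intersection lemma. You also correctly isolate the crux --- that surjectivity of the joint linearization, and hence the Banach-manifold structure on the moduli space, is exactly where the hypothesis \emph{each component of $\Sigma$ meets $U$} enters, via unique continuation for the Jacobi operator once the metric variations supported in a ball of $U$ are shown to hit every normal first-variation there.

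Two caveats are worth flagging, since they are the places where a blind sketch like this is most likely to be too glib. First, charting immersions ``modulo reparametrization'' is genuinely delicate: minimal immersions can have nontrivial isotropy (coverings, branched structures), and White's setup works hard to organize this, e.g.\ by considering parametrized families and quotienting carefully, which affects whether the moduli space is a manifold rather than merely an orbifold. Second, the assertion that perturbations $h$ supported near a point of $\Sigma\cap U$ ``already span enough normal first-variations'' deserves an actual computation of the metric-derivative of mean curvature; White gives a concrete perturbation (essentially conformal-type or normal-direction perturbations) that realizes an arbitrary function on $\Sigma\cap U$ as a first-variation of $H$. Your sketch names these issues but does not discharge them, which is fine for an outline but means the hard content is deferred to the citation --- exactly what the paper itself does.
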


\subsection{Existence of surfaces with bounded size}\label{subsec:surfaces with bounded size}
In this part, we present the existence of surfaces with controlled area and diameter.  Those surfaces will be used to decompose the manifold into small pieces.

The classical candidate surfaces for decomposition are minimal surfaces with index less than or equal to 1. In the following result, the area estimates in the second item are from Marques-Neves \cite{Marques-Neves-Duke}*{Proposition A.1} and the diameter bounds are due to Schoen-Yau \cite{Schoen-Yau-diameter}*{Theorem 1} (see also \cite{Gromov-Lawson-diameter}).
\begin{lemma}[\cite{LM20}*{Theorem 2.1 and Corollary 2.4}]\label{lem:area and diameter}
Let $(M^3,g)$ be a complete three-manifold and $\Sigma\subset M$ be a two-sided closed embedded minimal surface. Suppose that $R\geq 6$ in a neighbourhood of $\Sigma$.
\begin{enumerate}
    \item If $\Sigma$ is stable, then 
    \[ \mr{diam}_\Sigma\leq \frac{2\pi}{3}, \quad \Area \leq \frac{4\pi}{3}.\]
    \item If $\Sigma$ has index one, then
    \[\mr{diam}_\Sigma\leq \frac{4\pi}{3}, \quad \Area \leq \frac{16\pi}{3}.\]
\end{enumerate}
\end{lemma}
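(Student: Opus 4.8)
The plan is to reduce everything to the Gauss equation and the scalar curvature hypothesis. For a two-sided closed embedded minimal surface $\Sigma\subset M^3$ with unit normal $\nu$, the Gauss equation combined with minimality ($H=0$) gives $2K_\Sigma = R_M - 2\Ric_M(\nu,\nu) - |A|^2$, where $K_\Sigma$ is the intrinsic Gauss curvature and $A$ the second fundamental form. The stability (or index-one) condition controls the quadratic form $Q(\varphi)=\int_\Sigma |\nabla\varphi|^2 - (\Ric_M(\nu,\nu)+|A|^2)\varphi^2\,d\mu$. Rewriting $\Ric_M(\nu,\nu)+|A|^2 = \tfrac12(R_M + |A|^2) - K_\Sigma \geq 3 - K_\Sigma + \tfrac12|A|^2$ using $R_M\geq 6$, one sees that stability forces $\int_\Sigma |\nabla\varphi|^2 + K_\Sigma\varphi^2 \geq 3\int_\Sigma \varphi^2$ for all $\varphi$; in particular, taking $\varphi\equiv 1$ (on any component) yields $\int_\Sigma K_\Sigma \geq 3\Area(\Sigma) > 0$, so by Gauss--Bonnet $\Sigma$ is a union of spheres and $4\pi\chi(\Sigma)=\int K_\Sigma \geq 3\Area$, giving $\Area(\Sigma)\leq \tfrac{4\pi}{3}$ in the stable case. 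The index-one case is the same computation applied to the orthogonal complement of the first eigenfunction, costing one extra sphere component worth of total curvature, i.e. $\Area \leq \tfrac{16\pi}{3}$; this is exactly the content of Marques--Neves \cite{Marques-Neves-Duke}*{Proposition A.1}, so I would simply quote it.

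For the diameter bounds I would invoke the Schoen--Yau argument \cite{Schoen-Yau-diameter}*{Theorem 1} (equivalently the Gromov--Lawson estimate \cite{Gromov-Lawson-diameter}) directly. The idea there is a distance-comparison/$\mu$-bubble-type argument intrinsic to $\Sigma$: given two points $p,q\in\Sigma$ at intrinsic distance $\ell$, one uses the stability inequality with a carefully chosen test function supported in a tubular neighbourhood of a minimizing geodesic (a function of the distance to $p$) to derive a Bonnet--Myers-type obstruction once $\ell$ is too large. Quantitatively, the weighted stability inequality $\int_\Sigma |\nabla\varphi|^2 + K_\Sigma\varphi^2 \geq 3\int_\Sigma\varphi^2$ plays the role of a lower Ricci bound "$\geq 2$" in dimension two via the substitution trick (set $\varphi = e^{u}\psi$), and the standard Myers diameter bound $\pi/\sqrt{\kappa}$ with $\kappa$ tuned to this inequality produces $\diam_\Sigma \leq \tfrac{2\pi}{3}$ in the stable case. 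For index one, one first removes the (at most two) eigenfunction directions, localizing away from the first eigenfunction's concentration, which weakens the constant by a factor $2$ and yields $\diam_\Sigma\leq\tfrac{4\pi}{3}$; again this is precisely \cite{LM20}*{Corollary 2.4}, so I would cite it rather than reprove it.

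The only genuinely delicate point is bookkeeping the index-one case: one must check that after passing to the $L^2$-orthogonal complement of the lowest eigenfunction the relevant component of $\Sigma$ still satisfies a stability-type inequality with a concrete constant, and that the test functions used for the diameter estimate can be chosen in that complement without losing more than the claimed factor. Since the statement is attributed verbatim to \cite{LM20}*{Theorem 2.1 and Corollary 2.4} and \cite{Marques-Neves-Duke}*{Proposition A.1}, the honest proof here is: recall the Gauss-equation rewriting above to make the stable case self-contained, then cite these references for the index-one refinement and for the Schoen--Yau diameter mechanism. I would present it that way, since reproving the Schoen--Yau diameter estimate in full would be a substantial digression with no new content.
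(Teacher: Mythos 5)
The paper itself gives no proof of this lemma: it is stated as a direct citation of \cite{LM20}*{Theorem 2.1 and Corollary 2.4}, with the text just before it attributing the area bound in the index-one case to \cite{Marques-Neves-Duke}*{Proposition A.1} and the diameter bounds to \cite{Schoen-Yau-diameter}*{Theorem 1}. Your plan --- make the stable-case area bound self-contained via the Gauss equation, then quote Marques--Neves and Schoen--Yau for the rest --- is exactly the same treatment, so this is acceptable. A few points should be tightened.

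First, Gauss--Bonnet reads $\int_\Sigma K_\Sigma = 2\pi\chi(\Sigma)$, not $4\pi\chi(\Sigma)$. Since the conclusion forces $\chi(\Sigma)>0$ and $\Sigma$ is two-sided, for a connected $\Sigma$ one has $\Sigma\cong S^2$ with $\chi=2$ (or $\mathbb{RP}^2$ with $\chi=1$ if the ambient is non-orientable), so $\int K_\Sigma\leq 4\pi$ and the bound $\Area\leq \tfrac{4\pi}{3}$ still follows; but as written the factor is off by $2$.

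Second, your heuristic for the index-one area bound (``one extra sphere component worth of total curvature'') would yield $8\pi \geq 3\Area$, i.e.\ $\Area\leq\tfrac{8\pi}{3}$, whereas the stated bound is $\tfrac{16\pi}{3}$. The Marques--Neves argument is a Hersch balancing trick and loses a factor of $4$, not $2$, relative to the stable case; since you cite \cite{Marques-Neves-Duke}*{Proposition A.1} anyway this does not affect correctness, but the stated heuristic is misleading.

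Third, the description of the Schoen--Yau diameter mechanism as a ``$\mu$-bubble-type argument intrinsic to $\Sigma$'' that reduces to Bonnet--Myers via a substitution $\varphi = e^u\psi$ is not an accurate account of \cite{Schoen-Yau-diameter}*{Theorem 1}. Their argument is extrinsic: one tests the ambient stability (or index-one) inequality for $\Sigma\subset M$ against functions supported near a minimizing geodesic $\gamma\subset\Sigma$, and combines this with the second variation of length of $\gamma$ inside $\Sigma$; it is a two-step comparison, not an intrinsic Ricci bound. Again, since you defer to the citation, this is harmless, but the summary as written does not reflect the actual proof.
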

Schoen-Yau \cite{Schoen-Yau-diameter}*{Theorem 1} also gives a local version for surfaces-with-boundary.
\begin{lemma}[\cite{LM20}*{Theorem 2.3}]\label{lem:diamter for surface with boundary}
  Let $(M^3,g)$ be a complete three-manifold and $\Sigma\subset M$ be an embedded stable minimal surface with boundary. Suppose that $R\geq 6$ in a neighbourhood of $\Sigma$. Then 
    \[  \mr{dist}_\Sigma(x,\partial \Sigma)\leq \frac{2\pi}{3}.\]
\end{lemma}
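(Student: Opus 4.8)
The plan is to run the classical argument of Schoen--Yau \cite{Schoen-Yau-diameter} (see also \cite{Gromov-Lawson-diameter}): combine stability with the Gauss equation to obtain a Schr\"odinger-type inequality on $\Sigma$ whose potential is the intrinsic Gauss curvature, and then reduce to a one-dimensional comparison along a distance function. First I would record the consequence of the hypotheses. Since $\Sigma$ is minimal, the Gauss equation gives $2K_\Sigma = R - 2\Ric_M(\n,\n) - |A_\Sigma|^2$, where $K_\Sigma$ is the Gauss curvature of $\Sigma$, $\n$ a unit normal and $A_\Sigma$ the second fundamental form; hence on $\Sigma$
\[
|A_\Sigma|^2 + \Ric_M(\n,\n) \;=\; \tfrac12|A_\Sigma|^2 + \tfrac12 R - K_\Sigma \;\ge\; 3 - K_\Sigma ,
\]
using $R\ge 6$. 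Inserting this into the stability inequality $\int_\Sigma\big(|A_\Sigma|^2 + \Ric_M(\n,\n)\big)\varphi^2 \le \int_\Sigma|\nabla_\Sigma\varphi|^2$, which holds for every Lipschitz $\varphi$ vanishing on $\partial\Sigma$ (and extends to $\varphi\in H^1_0(\Sigma)$ by density), yields the key inequality
\[
\int_\Sigma |\nabla_\Sigma\varphi|^2 + K_\Sigma\,\varphi^2 \;\ge\; 3\int_\Sigma \varphi^2 ,\qquad \varphi\in H^1_0(\Sigma).
\]

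Next I would fix $x\in\Sigma$, put $r_0 := \dist_\Sigma(x,\partial\Sigma)$, and argue by contradiction assuming $r_0 > \tfrac{2\pi}{3}$. Writing $\rho := \dist_\Sigma(x,\cdot)$, I would test the key inequality with $\varphi = \psi\circ\rho$ for a function $\psi$ on $[0,r_0]$ with $\psi(0)=\psi(r_0)=0$; this $\varphi$ is supported in the compact set $\overline{B_{r_0}(x)}$, which meets $\partial\Sigma$ only where $\rho=r_0$, so $\varphi$ is admissible. By the coarea formula, $\int_\Sigma|\nabla\varphi|^2$ and $\int_\Sigma\varphi^2$ become $\int_0^{r_0}(\psi')^2 L$ and $\int_0^{r_0}\psi^2 L$, where $L(t) := \mathcal H^1\big(\{\rho=t\}\big)$; and by Gauss--Bonnet applied to the balls $B_t(x)$ (topological disks for small $t$), one has $\int_{\{\rho=t\}}K_\Sigma = -L''(t)$ away from the cut locus of $x$, together with $L(0)=0$, $L'(0)=2\pi$. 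Substituting and integrating by parts twice, the key inequality collapses to
\[
0 \;\ge\; \int_0^{r_0} L(t)\,\big[(\psi'(t))^2 + 2\,\psi(t)\psi''(t) + 3\,\psi(t)^2\big]\,dt .
\]
Since $L\ge 0$, it now suffices to choose $\psi$ so that the bracket is $\ge 0$ and $>0$ on a set where $L>0$. The borderline profile solves $(\psi')^2 + 2\psi\psi'' + 3\psi^2 = 0$; writing $p=\psi'$ as a function of $\psi$ this becomes $\tfrac{d}{d\psi}(\psi p^2) = -3\psi^2$, with first integral $(\psi')^2 = (m^3-\psi^3)/\psi$, $m=\max\psi$, so this profile rises from $0$ to $m$ and returns to $0$ over an interval of length $2\int_0^1\sqrt{\sigma/(1-\sigma^3)}\,d\sigma = \tfrac{2\pi}{3}$. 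Consequently, once $r_0 > \tfrac{2\pi}{3}$, a rescaled copy of this profile on $[0,r_0]$ makes the bracket strictly positive on $(0,r_0)$, the displayed integral is strictly positive, and we reach a contradiction. Hence $r_0 \le \tfrac{2\pi}{3}$.

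I expect the main difficulty to be the coarea/Gauss--Bonnet reduction in the second step: $\rho$ is only Lipschitz, so $\{\rho=t\}$ need not be a smooth curve and $L$ need not be $C^2$ across the cut locus of $x$. The standard remedy is to perform the coarea integration on the regular part and to verify that the singular contributions --- from the cut locus and from changes in the topology of $B_t(x)$ --- enter the one-dimensional inequality with the sign that preserves it, so that the constant $\tfrac{2\pi}{3}$ is not weakened; this is carried out in \cite{Schoen-Yau-diameter} and recalled in \cite{LM20}*{Theorem 2.3}. A minor additional point, if $\Sigma$ is noncompact or incomplete, is to replace it at the outset by a compact subdomain containing a minimizing geodesic from $x$ to $\partial\Sigma$, which does not affect the argument.
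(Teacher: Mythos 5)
Your proposal correctly reproduces the Schoen--Yau argument underlying the cited \cite{LM20}*{Theorem 2.3} (and \cite{Schoen-Yau-diameter}*{Theorem 1}): the Gauss-equation rewriting of the stability inequality, the coarea/Gauss--Bonnet reduction to $0 \ge \int_0^{r_0}L(t)\big[(\psi')^2+2\psi\psi''+3\psi^2\big]\,dt$, and the sharp $\cos^{2/3}$-type profile with critical interval length $\tfrac{2\pi}{3}$ all check out. Since the paper itself offers no proof of this lemma and simply cites \cite{LM20}, and since you correctly flag and defer the genuine technical point (validity of the one-dimensional reduction across the cut locus and changes in topology of $B_t(x)$) to the same references, your sketch matches the paper's treatment.
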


This implies the following Frankel property.
\begin{lemma}\label{lem:frankel}
Let $(N,\partial N,g)$ be a complete smooth Riemannian three-manifold with smooth boundary and $R\geq 6$. Suppose that each connected component of $\partial N$ is a closed surface with non-negative mean curvature. Suppose that there are two connected component $\Sigma_1$ and $\Sigma_2$ that are not stable minimal surfaces. Then there is a two-sided closed stable minimal surface in $(N\setminus \partial N,g)$.
\end{lemma}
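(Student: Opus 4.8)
The plan is to produce the surface by area minimization, using the mean convex components of $\partial N$ as barriers, with the hypothesis that $\Sigma_1$ and $\Sigma_2$ are not stable minimal entering precisely to keep the minimizer off $\partial N$. After passing to the connected component of $N$ that contains both $\Sigma_1$ and $\Sigma_2$, assume $N$ is connected. The first step is a reduction to a compact region, which I expect to be the main obstacle: $N$ may be non-compact, so a minimizing sequence could a priori escape to infinity. I would handle this by enclosing $\Sigma_1\cup\Sigma_2$ inside a compact region $N'\subseteq N$ with mean convex boundary, cut out by a $\mu$-bubble as in Gromov \cite{Gromov2014} and Song \cite{Song19}, so that after replacing $N$ by $N'$ one may assume $N$ compact; alternatively, one localizes the minimization and uses the a priori diameter bound for stable minimal surfaces-with-boundary of Lemma~\ref{lem:diamter for surface with boundary}, together with the monotonicity formula and density bounds, to confine the minimizing surfaces to a fixed bounded set.

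Assume now $N$ is compact and connected. I would minimize the perimeter of Caccioppoli sets $\Omega\subseteq N$ with $\Sigma_1$ contained in $\partial\Omega$ and $\overline\Omega$ disjoint from $\Sigma_2$. Since $\Sigma_1$ and $\Sigma_2$ are not stable minimal and $H_{\Sigma_i}\ge 0$, neither surface is area-minimizing to its inner side: pushing $\Sigma_i$ slightly into $N\setminus\partial N$ — along its mean curvature flow when $H_{\Sigma_i}\not\equiv 0$, and along $\phi_1\nu$ with $\phi_1>0$ the first eigenfunction of the Jacobi operator when $\Sigma_i$ is minimal but unstable — strictly decreases area, and this produces competitors showing that the minimizer does not degenerate onto $\Sigma_1$ or onto $\Sigma_2$. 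Hence a genuine minimizer $\Omega_0$ exists (by compactness of finite-perimeter sets), and by interior regularity in dimension three the interior part $\Gamma:=\partial^*\Omega_0\setminus\partial N$ of its reduced boundary is a smooth embedded minimal surface, two-sided and stable since it co-bounds $\Omega_0$; it is nonempty because $\Sigma_1$ and $\Sigma_2$ are distinct components of $\partial N$ of the connected manifold $N$, so $\partial\Omega_0$ cannot be contained in $\partial N$.

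It remains to see that $\overline\Gamma$ does not meet $\partial N$, so that $\Gamma$ is a closed surface in $N\setminus\partial N$. By the regularity theory for perimeter minimizers $\partial^*\Omega_0$ has no corners, so if $\overline\Gamma$ met a component $\Sigma_j$ of $\partial N$ it would be tangent to it along the contact set; but then, since $H_\Gamma\equiv 0$, $H_{\Sigma_j}\ge 0$, and $\Gamma$ lies locally on the side into which the mean curvature vector of $\Sigma_j$ points, the strong maximum principle would force $\Gamma$ to coincide with $\Sigma_j$ there — impossible, since $\Gamma\subseteq N\setminus\partial N$ while $\Sigma_j\subseteq\partial N$. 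Therefore $\Gamma$ would be a nonempty, closed, two-sided, stable minimal surface in $N\setminus\partial N$, which would prove the lemma. The only genuinely delicate point in this scheme is the reduction to a compact region, and everything afterwards is standard geometric measure theory together with the maximum principle.
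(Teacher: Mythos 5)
Your proposal is correct and follows essentially the same route as the paper's proof: both exploit the fact that $\Sigma_1,\Sigma_2$ are not stable minimal to make them strict barriers (the paper perturbs them to strictly mean convex, you use the corresponding inward deformations as competitors), both then minimize area in the homology class of $\Sigma_1$ (equivalently, perimeter of Caccioppoli sets separating $\Sigma_1$ from $\Sigma_2$), and both rely on the distance-to-boundary bound for stable minimal surfaces (Lemma~\ref{lem:diamter for surface with boundary}) to prevent the minimizing sequence from escaping to infinity. The paper's version is briefer --- it fixes a curve $\gamma$ joining $\Sigma_1$ to $\Sigma_2$ and observes the minimizer must intersect $\gamma$ and be compact --- leaves implicit the maximum-principle step you spell out for keeping the minimizer off the remaining weakly mean-convex components of $\partial N$, and does not use the $\mu$-bubble cutoff you mention as an alternative compactification (which would anyway need some adaptation here, since the free-boundary computation in Lemma~\ref{lem:mu bubble} uses that $\partial N$ is minimal, whereas here it is only weakly mean convex).
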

\begin{proof}
Since $\Sigma_i$ ($i=1,2$) is not a stable minimal surface, then $\Sigma_i$ is either non-minimal or minimal but unstable. In either case, we can perturb it slightly to be strictly mean convex. Thus without loss of generality, we assume that $\Sigma_1$ and $\Sigma_2$ are mean convex. Let $\gamma$ be a curve connecting $\Sigma_1$ and $\Sigma_2$. 
Then by taking the area minimizer among all surface that is homologous to $\Sigma_1$, there is a stable two-sided minimal surface intersecting $\gamma$.
Such a minimizer must be compact because of Lemma \ref{lem:area and diameter}. This completes the proof of Lemma \ref{lem:frankel}.
\end{proof}

After cutting along minimal surfaces with index less than or equal to 1, we will use the $\mu$-bubble technique to subdivide a manifold with positive scalar curvature into submanifolds with boundary components of controlled size.

For a subset $\mc K$ of a Riemannian manifold $(M,g)$ and $r >0$, let 
\[N_r^g(\mc K) = \{ x \in M: \dist_{(M,g)}(x,\mc K) < r \}.\]
We will drop the superscript $g$ whenever it is clear which Riemannian metric is being used.
The following lemma is a consequence of arguments in  Zhu \cite{Zhu20}*{Theorem 1.1}, Gromov \cite{Gro20} and Chodosh-Li \cite{CL20}*{\S 6.2} (see also \cite{GroAspherical}).
\begin{lemma}\label{lem:mu bubble}
Let $N$ be a manifold with boundary and $R\geq 6$. Suppose that each connected component of $\partial N$ is a stable minimal surface. Then for each compact region $\Omega$, there exists a compact region $\hat \Omega$ with 
\[\Omega\subset \hat \Omega\subset N_{\frac{4}{3}\pi}(\Omega) \]
and for each connected component $\Gamma\subset\partial \hat \Omega\setminus \partial N$, it is one of the following cases:
\begin{enumerate}
    \item $\Gamma$ is diffeomorphic to $S^2$ and 
    \[ \mr{diam}_N\Gamma\leq \pi;\quad \Area(\Gamma)<2 \pi;\]
    \item $\Gamma$ is diffeomorphic to $\mb{RP}^2$ and 
    \[ \mr{diam}_N\Gamma\leq \pi;\quad \Area(\Gamma)< \pi;\]
    \item $\Gamma$ is a free boundary disc and 
    \[ \dist(x,\partial \Gamma)\leq \pi \text{ for } x\in\Gamma;\quad 
    \mr{diam}_N\Gamma\leq \frac{8}{3}\pi \quad \Area(\Gamma)< \pi;\]
\end{enumerate}
\end{lemma}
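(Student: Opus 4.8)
The plan is to construct $\hat\Omega$ as a $\mu$-bubble (warped prescribed-mean-curvature minimizer) in a metric collar of $\Omega$, and then extract the diameter and area bounds of the new boundary components from the stability inequality of a $\mu$-bubble, exactly as in Chodosh–Li \cite{CL20}*{\S 6.2} and Zhu \cite{Zhu20}. First I would set up the Caccioppoli-set functional: fix the compact region $\Omega$ and a smooth function $h$ on the interior of the annular region $N_{4\pi/3}(\Omega)\setminus \Omega$ that blows up to $+\infty$ on $\partial\Omega$ and to $-\infty$ on $\partial N_{4\pi/3}(\Omega)$ (this is where the width $\tfrac{4}{3}\pi$ enters — one needs $h$ with $|\nabla h|$ controlled so that the barrier argument forces the minimizer to stay strictly inside the prescribed neighborhood). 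Then minimize $\mathcal A(\Sigma)=\mathrm{Area}(\partial^*\Omega' \setminus \partial N) - \int_{\Omega'\triangle\Omega} h$ over Caccioppoli sets $\Omega'$ with $\Omega\subset \Omega'\subset N_{4\pi/3}(\Omega)$. Standard geometric measure theory gives a smooth minimizer $\hat\Omega$ whose free boundary parts meet $\partial N$ orthogonally (here one uses that $\partial N$ is stable minimal, hence a barrier, so no touching occurs in the interior in a bad way), and whose ``new'' boundary $\Gamma\subset \partial\hat\Omega\setminus\partial N$ is a smooth surface (possibly with free boundary on $\partial N$) with prescribed mean curvature equal to $h$.

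Next I would run the second-variation/stability argument for the $\mu$-bubble. The first variation forces $H_\Gamma = h$ along $\Gamma$; the second variation, combined with the Gauss equation, $R\geq 6$, and the choice of $h$ (whose gradient term is absorbed via a Cauchy–Schwarz/rearrangement trick as in Gromov's ``$\mu$-bubble'' inequality), yields that for every $\varphi$,
\[
\int_\Gamma |\nabla\varphi|^2 \;\geq\; \int_\Gamma \Big(\tfrac12 R_\Gamma + 1 - c|\nabla h|^2 - \dots\Big)\varphi^2,
\]
which after the optimal choice of $h$ reduces to the statement that $\Gamma$ (with the conformally modified or directly the induced metric) carries a positive operator $-\Delta + K_\Gamma - 1$ — i.e. $\Gamma$ is a surface with an effective lower Ricci/scalar bound of the type ``$R_\Gamma \geq 2$''. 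For a closed such $\Gamma$, Bonnet–Myers-type estimates and Gauss–Bonnet give $\Gamma\cong S^2$ or $\mathbb{RP}^2$ with $\mathrm{diam}\leq\pi$ and the area bounds $<2\pi$, $<\pi$ respectively (the $\mathbb{RP}^2$ and area constants come from the Gauss–Bonnet/coarea estimate on a surface with $\int K > 0$ and $K$ controlled, as in \cite{CL20}); for a component $\Gamma$ with free boundary on $\partial N$, one applies the \emph{Schoen–Yau} / Lemma \ref{lem:diamter for surface with boundary}-style argument to the minimal-like surface $\Gamma$ to get $\dist_\Gamma(x,\partial\Gamma)\leq\pi$, then a doubling or direct coarea estimate for $\mathrm{diam}_N\Gamma\leq\tfrac{8}{3}\pi$ and $\mathrm{Area}(\Gamma)<\pi$.

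I would organize the proof as: (i) existence and regularity of the $\mu$-bubble with the inclusion $\Omega\subset\hat\Omega\subset N_{4\pi/3}(\Omega)$; (ii) the stability inequality and its reduction, via $R\geq 6$ and the choice of $h$, to a spectral lower bound on each $\Gamma$; (iii) the topological/metric classification of $\Gamma$ in the closed case and the free-boundary case, producing the three listed alternatives with the stated constants. The main obstacle I expect is step (ii): carefully choosing the warping function $h$ (or equivalently the weight $e^{u}$) so that simultaneously (a) the barrier argument keeps the bubble inside $N_{4\pi/3}(\Omega)$ — this pins the width constant to $\tfrac43\pi$ — and (b) the gradient-of-$h$ error term in the second variation is fully absorbed, leaving exactly the clean inequality that yields $\mathrm{diam}\leq\pi$ and the sharp area constants. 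The closed vs. free-boundary dichotomy and the constant bookkeeping in the Gauss–Bonnet step for the $\mathbb{RP}^2$ and disc cases also require care, but these follow the template already worked out in \cite{LM20}, \cite{CL20}, and \cite{Zhu20}, so I would cite those computations rather than reprove them.
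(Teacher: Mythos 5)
Your proposal follows the same route as the paper: construct $\hat\Omega$ as a $\mu$-bubble with a warping function $h$ blowing up on the two ends of the annulus $N_{4\pi/3}(\Omega)\setminus\Omega$, obtain $H_\Gamma = h$ on the new boundary, feed $R\geq 6$ and the choice of $h$ into the second-variation inequality, and finish with Gauss--Bonnet plus a Schoen--Yau/coarea argument for the diameter. The paper's explicit choice is $h = (1+\eps)\tan\bigl(\tfrac{3}{4}\rho\bigr)$ with $\rho$ a smoothed signed distance ranging over $[-\tfrac{2}{3}\pi,\tfrac{2}{3}\pi]$ and $|\nabla\rho|<1+\eps$; with $|A|^2\geq\tfrac12 H^2$ this produces $2\pi\chi(\Gamma)\geq\int_\Gamma\bigl(\tfrac34 h^2 - |\nabla h| + 3\bigr) \geq \bigl(\tfrac94-\eps\bigr)\Area(\Gamma)$.

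One caveat worth flagging: in step (ii) you write that the stability inequality ``reduces to the statement that $\Gamma$ carries a positive operator $-\Delta + K_\Gamma - 1$.'' Taken at face value, that gives only $\int_\Gamma K_\Gamma > \Area(\Gamma)$, hence $\Area(\Gamma) < 4\pi$ for an $S^2$ component and $<2\pi$ for an $\mathbb{RP}^2$ component --- twice the bounds the lemma asserts and which you correctly quote in your conclusion. The constant you actually need is $>2$ (the paper gets $\tfrac94-\eps$), and it comes precisely from the combination $\tfrac12 R\geq 3$ together with the $\tfrac34 h^2 - |\nabla h|\geq -\tfrac34(1+\eps)^2$ estimate supplied by the $\tan$ warping; if you use a generic $h$ you will not get this cancellation. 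So when you ``cite the computations,'' make sure you carry the correct constant through, otherwise the area bounds in the statement do not follow. (The diameter estimates the paper obtains by invoking \cite{LZ18}*{Proposition 2.2} rather than a generic Bonnet--Myers argument, but that is an interchangeable detail.)
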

\begin{proof}
Let $\Omega_0$ be a small perturbation of
$N_{\frac{2}{3}\pi}(\Omega) = \{x \in N: \dist (x, \Omega)< \frac{2}{3}\pi \}$, so that $ \partial \Omega_0$ is smooth.
Let $d:N \rightarrow \RR$ denote a signed distance function from $\partial \Om_0$, $d(x) = \dist(x, \partial \Om_0) $
if $x \notin \Om_0$ and  $d(x) = - \dist(x, \partial \Om_0) $ if $x \in \Om_0$. Note that $d$ is Lipschitz. 
Denote the interior of $N_{\frac{4}{3}\pi}(\Omega) \setminus \Omega$ by $U$ and let 
$\rho: \overline{U} \rightarrow \RR $ be a $C^0$ perturbation of $d$ that is smooth and satisfies 
$\rho(x)= -\frac{2}{3}\pi$ for $x \in \partial \Omega$, 
$\rho(x)= \frac{2}{3}\pi$ for $x \in \partial N_{\frac{4}{3}\pi}(\Omega)$
 and $|\nabla \rho| < 1 + \eps$ for some small $\eps < \frac{1}{100}$. 
Define a smooth function $h$ on $U$ by
\[ h(x):=(1+\eps)\tan\Big[\frac{3}{4}\rho(x) \Big].\]
By \cite{CL20}*{Propositions 12 and 15} there exists a minimizer $\mc K$, which minimizes 
\[  \mc A^h(\mc K'):=\mc H^2(\partial \mc K'\setminus \partial N)-\int_{U} (\chi_{\mc K'}-\chi_{\Om_0})h \,\mr d\mc H^2\]
among all compact region $\mc K'$ satisfying
\[ \Omega\subset \mc K'\subset \{x\in N;\dist_N(x,\Omega)<\frac{4}{3}\pi\}.\]
Let $\Gamma$ be a connected component of $\partial \mc K\setminus \partial N$. Then $\Gamma$ is a smoothly embedded prescribed mean curvature surface with free boundary, with scalar mean curvature $H=h$ and meeting $\partial N$ orthogonally. The minimizing property gives that for all smooth functions $\phi$ defined on $\Gamma$,
\[ \int_\Gamma |\nabla \phi|^2-(|A|^2+\mr{Ric}(\mf n,\mf n)+\langle \mf n,\nabla h\rangle)\phi^2\,\mr d\mc H^2 \geq \int_{\partial \Gamma}A_{\partial N}(\mf n,\mf n)\phi^2\, \mr d\mc H^1,\]
where $A$ and $A_{\partial N}$ are the second fundamental forms of $\Gamma$ and $\partial N$; $\mf n$ is the unit normal vector field of $\Gamma$. Now let $\phi\equiv 1$. Recall that 
\[  |A|^2+\mr{Ric}(\mf n,\mf n)=\frac{1}{2}(|A|^2+|H|^2+R)-K_\Gamma.\]
Using $|A|^2 \geq \frac{1}{2} |H|^2$ (by Cauchy inequality) and $2 \pi \chi (\Gamma) = \int_{\Gamma} K_\Gamma d\mc H^2 -\int _{\partial \Gamma} A_{\partial N}(\mf n,\mf n) d\mc H^1$ (by Gauss-Bonnet and minimality of $\partial N$) we obtain 
\[2\pi \chi(\Gamma)\geq \int_\Gamma \frac{1}{2}|A|^2+\frac{1}{2}h^2+\langle\nabla  h,\mf n\rangle+\frac{1}{2}R\,\mr d\mc H^2\geq \int_{\Gamma} \frac{3}{4}h^2 -|\nabla h| +3 \geq (\frac{9}{4}-\eps) \Area(\Gamma)>2 \Area(\Gamma),\]
where $\chi(\Gamma)$ is the Euler characteristic of $\Gamma$; $H^{\partial N}$ is the mean curvature of $\partial N$.
If $\Gamma$ is non-orientable, then $\Gamma$ is diffeomorphic to $\mb {RP}^2$ and $\Area(\Gamma)< \pi$. If $\Gamma$ is orientable and closed, then $\Gamma$ is diffeomorphic to $S^2$ and $\Area(\Gamma)< 2\pi$. If $\Gamma$ is orientable and has non-empty boundary, then $\Gamma$ is a free boundary disc and $\Area(\Gamma)<\pi$.

It remains to prove the diameter upper bound.  
By applying the same argument as in \cite{LZ18}*{Proposition 2.2}, we conclude that 
\begin{itemize}
    \item if $\Gamma$ is closed, then $\mr{diam}_N\Gamma\leq  \pi$;
    \item if $\Gamma$ is a disc, then $\dist_N(x,\partial \Gamma)\leq \pi$ for $x\in\Gamma$.
\end{itemize}
Since each connected component of $ \partial N$ is a stable minimal surface, then by Lemma \ref{lem:area and diameter}, we have that its  diameter is bounded by $\frac{2}{3}\pi$. If $\Gamma$ is a disc it then follows that $\mr{diam}_N\Gamma \leq \frac{2}{3}\pi+ 2\pi = \frac{8}{3}\pi$.

Hence, Lemma \ref{lem:mu bubble} is proved.
\end{proof}

\subsection{Geometrically prime regions}\label{subsec:geometrically prime}
We will use the concept of {\em geometrically prime regions}, which was defined in \cite{LM20}*{Definition 2.5}. Note that the region here may be non-compact.
\begin{definition}
A region $N$ is {\em geometrically prime} if 
\begin{enumerate}
	\item $N\setminus \partial N$ does not contain any closed embedded minimal surfaces;
	\item each connected component of $\partial N$ is a compact minimal surface with index less than or equal to 1.
	\end{enumerate}	
	\end{definition}

\begin{remark}\label{rmk:two-sided separation}
Let $N$ be a geometrically prime region with $R\geq 6$. We conclude that each embedded closed two-sided surface $\Sigma\subset N\setminus \partial N$ separates $N$. Suppose not, then there exists an area minimizer $\Sigma$ in the homology class. Then $\Sigma$ is compact by Lemma \ref{lem:diamter for surface with boundary}. This gives a contradiction.
 \end{remark}

We use the dichotomy theorem about existence of a minimal surface or a mean convex foliation from \cite{Gromov2014} and \cite{Song19}*{Theorem 2.1} to prove the following result.
\begin{proposition}\label{prop:index one}
Let $N$ be a (possibly non-compact) three-manifold with (possibly empty) boundary, bumpy metric and $R\geq 6$. Suppose that all boundary components of
$N$ are stable minimal surfaces and that $N$ does not contain any closed one-sided embedded minimal surfaces with stable double covers or two-sided stable embedded minimal surfaces in its interior.
Then there are two possibilities:
\begin{enumerate}
    \item $N\setminus \partial N$ contains a closed two-sided minimal surface of index one;
    \item\label{item:no closed minimal surface} $N\setminus \partial N$ does not contain any closed embedded minimal surfaces.  Let $\Omega\subset N$ be a compact region with smooth boundary so that each connected component of $\partial N$ is either contained in $\partial \Omega$ or disjoint from it. In this case, there exists a connected, closed, two-sided, embedded $C^{1,1}$ surface $\Sigma\subset N\setminus(\Omega\setminus\partial \Omega)$ intersecting $\partial\Omega$ and having mean curvature vector pointing towards $\Omega$.
\end{enumerate}
\end{proposition}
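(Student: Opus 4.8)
The plan is to deduce the statement from the min-max dichotomy of Gromov \cite{Gromov2014} and Song \cite{Song19}*{Theorem 2.1}, applied on an exhaustion of $N$, and then — in the case where that dichotomy never produces a minimal surface — to construct $\Sigma$ by a constrained minimization (a $\mu$-bubble type obstacle problem) localized near $\partial\Omega$.

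\medskip

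First I would fix a smooth exhaustion $K_1\subset K_2\subset\cdots$ of $N$ by compact connected codimension-zero submanifolds with smooth boundary, arranged so that each connected component of $\partial N$ is either contained in $\partial K_j$ or disjoint from $K_j$, with $\Omega\subset\mathrm{int}(K_j)$ for all large $j$, and with $\partial\Omega$ lying in the interior of $K_1$ away from the artificial part of $\partial K_1$. Applying the dichotomy of \cite{Gromov2014} and \cite{Song19}*{Theorem 2.1} to each $K_j$ (holding fixed the part of $\partial K_j$ contained in $\partial N$, which is stable minimal) yields, for each $j$, either a closed embedded minimal surface in $\mathrm{int}(K_j)\subset\mathrm{int}(N)$, or a mean-convex sweepout of $K_j$ relative to that boundary. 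If the first alternative ever occurs, a local one-parameter min-max around such a surface — together with the index bound of Marques--Neves \cite{Marques-Neves-Duke} and the catenoid estimate, and using that $g$ is bumpy and that $N$ contains no two-sided stable embedded minimal surface and no one-sided embedded minimal surface with stable double cover — produces a closed, two-sided, embedded minimal surface of index exactly one, which is alternative (1); ruling out one-sided and higher-multiplicity limits here is a standard but delicate point handled as in \cite{Song19}. So I would henceforth assume (1) fails; then the sweepout alternative holds for every $j$, and in particular $\mathrm{int}(N)$, being exhausted by the $\mathrm{int}(K_j)$, contains no closed embedded minimal surface — the first assertion of (2).

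\medskip

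To produce $\Sigma$ for the given $\Omega$, I would use Lemma \ref{lem:mu bubble} (whose hypotheses hold, since every component of $\partial N$ is a stable minimal surface) to fix a compact region $\widehat\Omega$ with $\Omega\subset\widehat\Omega\subset N_{4\pi/3}(\Omega)$ whose boundary off $\partial N$ has controlled mean curvature, to serve as a confining barrier; then minimize $\mathcal K\mapsto\mathcal H^2(\partial^\ast\mathcal K\setminus\partial N)$ over Caccioppoli sets $\mathcal K$ with $\Omega\subseteq\mathcal K\subseteq\widehat\Omega$, obtaining a minimizer $\mathcal K_\ast$ by $BV$-compactness. Away from $\partial\Omega$, $\partial\widehat\Omega$ and $\partial N$ its reduced boundary is a smooth embedded minimal surface; any connected component of it lying in $\mathrm{int}(N)$ and disjoint from all of these would be a closed, two-sided (it bounds $\mathcal K_\ast$), stable (being area-minimizing) minimal surface, contradicting the hypothesis. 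The maximum principle — applied against the minimal surfaces $\partial N$, against the barrier $\partial\widehat\Omega$, and, comparing with geodesic balls that fill $\Omega$ from inside, against $\partial\Omega$ — then forces $\partial^\ast\mathcal K_\ast\setminus\partial N$ to be non-empty, to stay off $\partial\widehat\Omega$, and to touch $\partial\Omega$ along a non-empty set on which $\partial\Omega$ is mean-convex towards $\Omega$. I would let $\Sigma$ be the connected component of $\partial\mathcal K_\ast$ meeting $\partial\Omega$. It is closed, embedded, two-sided, and contained in $N\setminus(\Omega\setminus\partial\Omega)$ because $\mathcal K_\ast\supseteq\Omega$; off its contact set with $\partial\Omega$ it is smooth and minimal, while across the free boundary of the contact set the regularity theory for the obstacle problem gives exactly $\Sigma\in C^{1,1}$ (and no more, which is why the statement only asks for $C^{1,1}$). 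Finally, $\mathcal K_\ast$ being outward-minimizing among regions containing $\Omega$, the surface $\Sigma$ has non-negative mean curvature with respect to the normal pointing out of $\mathcal K_\ast$ in the distributional sense; equivalently its mean curvature vector points towards $\mathcal K_\ast\supseteq\Omega$. This gives alternative (2).

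\medskip

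I expect the main obstacles to be two. The first is adapting the compact dichotomy to the non-compact $N$: running the min-max on the exhausting pieces and passing to a limit while keeping control of the index, so as to land in alternative (1) with a genuinely two-sided index-one surface rather than a stable or one-sided one — this is exactly where bumpiness and the two exclusion hypotheses are used. The second is in the construction of $\Sigma$: showing that the minimizing region $\mathcal K_\ast$ really does abut $\partial\Omega$, rather than peeling off $\partial\Omega$ everywhere or getting pinned against $\partial\widehat\Omega$ in a way that spoils the sign of the mean curvature; this is what makes the choice of the confining region $\widehat\Omega$ (and the mean-curvature sign of its boundary supplied by Lemma \ref{lem:mu bubble}), together with the maximum-principle comparisons, the technical heart of the argument.
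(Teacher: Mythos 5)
Your overall strategy---constrained area minimization to produce $\Sigma$, deduction of ``no closed minimal surfaces'' from the two exclusion hypotheses---is in the spirit of the paper's proof, and the regularity/sign discussion for the obstacle problem is correct. But there is a genuine gap in the choice of outer barrier, and it is exactly the ``second obstacle'' you flag at the end without resolving it.

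The region $\widehat\Omega$ produced by Lemma~\ref{lem:mu bubble} does \emph{not} come with a sign on the mean curvature of $\partial\widehat\Omega\setminus\partial N$. Its boundary components are prescribed-mean-curvature surfaces with $H = h$ (with respect to the outward normal), where $h = (1+\eps)\tan(\tfrac34\rho)$ changes sign across $\partial\Omega_0$; the lemma controls area, diameter, and topological type but says nothing about where in $N_{4\pi/3}(\Omega)$ the boundary actually sits, so $h$, and hence $H$, on $\partial\widehat\Omega$ can be negative. If $H < 0$ on $\partial\widehat\Omega$ then $\partial\widehat\Omega$ is \emph{not} a barrier from outside for the constrained problem $\Omega \subseteq \mathcal K \subseteq \widehat\Omega$: the minimizer $\mathcal K_*$ can adhere to $\partial\widehat\Omega$, and in the extreme case $\mathcal K_* = \widehat\Omega$ is itself a minimizer (first variation inward is $\int H\phi > 0$). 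In that situation the component of $\partial\mathcal K_*$ you would want to call $\Sigma$ either fails to touch $\partial\Omega$ at all, or else touches both $\partial\Omega$ and $\partial\widehat\Omega$, in which case its mean curvature vector points \emph{towards} $\Omega$ where it hugs $\partial\Omega$ but \emph{away} from $\Omega$ where it hugs $\partial\widehat\Omega$---not the definite sign the statement requires. Your maximum-principle argument only rules out components that avoid all three of $\partial\Omega$, $\partial\widehat\Omega\setminus\partial N$, $\partial N$; it does not rule out components that meet only $\partial\widehat\Omega\setminus\partial N$.

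The paper gets around this by manufacturing an outer barrier with the \emph{right} sign instead of relying on the $\mu$-bubble. It takes $\mathcal K \supset N_{3\pi}(\Omega)$, modifies the metric only near $\partial\mathcal K\setminus\partial N$ so that this boundary is mean convex in $\widetilde g$ (and bumpy by Theorem~\ref{thm:bumpy}), and runs mean curvature flow $\partial\mathcal K_t$ inward. Either the flow touches $\partial\Omega$ at a first time $\hat t$, and then the constrained minimizer $\widehat\Omega$ (for the $\widetilde g$-obstacle problem between $\Omega$ and $\mathcal K$) is trapped inside $\mathcal K_{\hat t}$ by the avoidance principle; combining this confinement with the Schoen--Yau diameter bound for the stable-minimal part of $\partial\widehat\Omega$ and with $\dist(\partial\mathcal K,\Omega)>3\pi$ forces $\partial\widehat\Omega$ to touch $\partial\Omega$ (Claim~\ref{claim:minimizer intersects Omega}). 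Or else the flow gets stuck on a smooth stable minimal surface in $(\mathcal K,\widetilde g)$, in which case a one-parameter min-max produces an index-one surface, which must lie in the modified region (else alternative~(1) holds), and is used to shrink the domain and iterate---terminating after finitely many steps by bumpiness. Finally, the absence of closed minimal surfaces in $N\setminus\partial N$ is deduced at the end via Lemma~\ref{lem:frankel}, not from an exhaustion argument. Your exhaustion-plus-local-min-max setup for alternative~(1) is workable in principle but is also heavier than needed: the paper simply negates~(1) once and for all and works in a single compact domain with a modified metric. The missing ingredient in your write-up is precisely the mechanism (modified metric, MCF first-touching time, distance estimate) that guarantees the constrained minimizer abuts $\partial\Omega$ with the correct mean curvature sign.
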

\begin{proof}
If $N$ is compact, then we can apply min-max theory to obtain an index 1 two-sided embedded minimal surface in the interior of $N$ (see \citelist{\cite{Zhou-15-one-parameter}*{Theorem 1.1 and Remark 1.3} \cite{KMN-catenoid-estimate}*{Theorem 1.5}}). Now we assume that $N$ is non-compact. 

Suppose that $N\setminus \partial N$ does not admit an index $1$ two-sided embedded closed minimal surface in its interior. 

Let $\Omega\subset N$ be a compact region with smooth boundary so that each connected component of $\partial N$ is either contained in $\partial \Omega$ or disjoint from it. We will show that there exists a connected, closed, two-sided, embedded surface $\Sigma\subset N\setminus(\Omega\setminus\partial \Omega)$ intersecting $\partial\Omega$ and having mean curvature vector pointing towards $\Omega$.
We argue as in \cite{Song19}*{Theorem 2.1}.
Let $\mc K \supset N_{3 \pi}(\Omega)$ be a set with smooth boundary; we can assume that each connected component of 
$\partial N$ is either contained in $\partial \mc K$ or disjoint from it. Define a new metric $\wti g$ that coincides with $g$ outside of $N^g_{\frac{\pi}{2}}(\partial \mc K \setminus \partial N)$
and so that $\partial \mc K \setminus \partial N$ is mean convex with respect to  $\wti g$. By Theorem \ref{thm:bumpy}, we may assume that metric $\wti g$ is bumpy.
By the choice of $\wti g$ and Lemma \ref{lem:diamter for surface with boundary}, for each closed, connected two-sided minimal surface $\Gamma\subset (\mc K,\wti g)$ of index less than or equal to one, it is either disjoint from $N_r(\Omega)$ or disjoint from $N_{\frac{\pi}{2}}^g(\partial \mc K\setminus \partial N)$.

Consider mean curvature flow $\{ \partial \mc K_t \}$ starting on $\mc K_0 = \mc K$ in $(\mc K, \wti g)$. By \cite{White2000}*{Theorem 11.1}, there exist two possibilities: 
\begin{enumerate}[label=(\roman*)]
    \item\label{item:touch} there exists $t'>0$, such that $\partial\mc K_{t'} \cap \partial \Omega$ is non-empty;
    \item\label{item:minimal barrier} there exists set $B$, $\Omega \subset B \subset\mc K$, with $\partial B$ a smooth stable minimal surface in $(\mc K, \wti g)$.
\end{enumerate}

Consider the first possibility. Let $\hat t<t'$ be the first time of the flow touching $\Omega$. Now consider the constrained minimizing problem $\mc P(\Omega,\mc K;\wti g)$ (see \citelist{\cite{Lin-obstacle}\cite{ZhihanWang-obstacle}*{Section 2.4}} for the regularity): minimizing $\Area(\partial \Omega';\wti g)$ among all $\Omega'$ so that $\Omega\subset\Omega'\subset \mc K$ and $\partial \Omega'$ is smooth. Denote by $\hat \Omega$ a constrained minimizer of $\mc P(\Omega,\mc K;\wti g)$. Clearly, $\partial\hat \Omega\setminus \Omega$ is a stable minimal surface w.r.t. $\wti g$. Since $\partial \mc K_t\setminus \partial N$ is mean convex, then by Maximum Principle \cite{White-MP-2010}*{Theorem 1}, $\partial \mc K_t\setminus \partial N$ does not intersect $\partial\hat \Omega\setminus \Omega$ for all $t\in[0,\hat t)$. Moreover, we have that $ \hat \Omega\subset \mc K_{\hat t}$. In the next claim, we will prove that $\partial \hat \Omega$ intersects $\partial \Omega$. Denote by $\Sigma$ the connected component of $\partial \hat \Omega$ that intersect $\Omega$. Then this is the desired surface.
\begin{claim}\label{claim:minimizer intersects Omega}
$\partial \hat \Omega$ intersects $\partial \Omega$. 
\end{claim}
\begin{proof}[Proof of Claim \ref{claim:minimizer intersects Omega}]
Suppose not, then $\partial \hat \Omega$ is a stable minimal surface w.r.t. $\wti g$. Since $(N\setminus \partial N,g)$ has no two-sided stable minimal surface, then each connected component of $\partial \hat \Omega\setminus\partial N$ intersects $N_{\frac{\pi}{2}}^g(\partial \mc K\setminus \partial N)$. By Lemma \ref{lem:diamter for surface with boundary}, for each $x\in \partial \hat \Omega$,
\[  \dist_N(x, N_{\frac{\pi}{2}}^g(\partial \mc K\setminus \partial N)\leq \frac{2\pi}{3}.\]
It follows that 
\[  \dist_N(\partial\hat \Omega\setminus \partial N,\Omega)\geq 3\pi-\frac{\pi}{2}-\frac{2\pi}{3}>\pi,\]
which implies that $\hat \Omega\supset \{x\in N;\dist_N(x,\Omega)\leq \pi\}.$ This contradicts the fact of $\hat \Omega\subset \mc K_{\hat t}$. Then Claim \ref{claim:minimizer intersects Omega} is proved.
\end{proof}

Suppose we are in the second possibility. We can then apply the min-max construction to find a two-sided index $1$ minimal surface $S$. By the diameter estimate in Lemma \ref{lem:area and diameter}
and our assumption that there are no two-sided index $1$ minimal surfaces in $(N,g)$, we have that $S$ does not intersect $\Omega$. Let $\tilde{\mc K}^1$ denote a connected component of $\mc K \setminus S$
that contains $\Omega$. We can perturb $S \subset \partial \tilde{\mc K}^1$ to the inside using the first eigenfunction of the stability operator to obtain 
set $\mc K^1 \subset \tilde{\mc K}^1$ with mean convex boundary. We can now apply mean curvature flow to $\mc K^1$ and arrive at the same two possibilities \ref{item:touch} and \ref{item:minimal barrier} as before. If we are in situation \ref{item:touch},
we obtain mean convex surface $\Sigma$ as above. If we are in situation \ref{item:minimal barrier}, we apply the min-max construction again. Since the metric is bumpy, we can encounter possibility \ref{item:minimal barrier} only finitely many times.

So far we know that mean convex surface $\Sigma$ exists. 
Suppose $\Omega$ contains an embedded minimal surface $\Sigma'$ in its interior. By our assumption
$\Sigma'$ is either two-sided and unstable or one-sided with unstable double cover. Consider 
manifold with boundary $\Omega''$ obtained as metric completion of $\Omega \setminus \Sigma'$.
Then by Lemma \ref{lem:frankel} at least one connected component of $\Omega''$ (and hence $\Omega$)
contains a stable two-sided minimal surface in its interior, contradicting the assumption of the theorem.
Since this is true for all compact domains $\Omega$, we conclude that possibility \eqref{item:no closed minimal surface} of the theorem holds.
\end{proof}

The next theorem asserts that we can decompose $M$ into geometrically prime regions.
We will say that a countable collection of smooth connected 3-manifolds
with boundary $\{ U_i\}$ is a \emph{decomposition} of a 3-manifold $M$ if there exists 
a countable collection of disjoint embedded closed surfaces $\{ \Sigma_j\}$ in $M$,
such that there exists a Riemannian isometry from $M \setminus \bigcup_j \Sigma_j$ to the
union of interior of $\{ U_i\}$. (For example, $M = \mathbb{RP}^3$ and $\{ U \}$ is a decomposition of $M$ consisting of
one element $U$ equal to the metric completion of an open ball obtained by removing a non-trivial embedded $\mathbb{RP}^2 \subset M$.) By a slight abuse of notation we will identify the interiors of $U_i$ and the corresponding
isometric subsets of $M$.

\begin{theorem} \label{prime decomposition}
Let $M$ be a (possibly non-compact) manifold with (possibly empty) boundary, bumpy metric and $R\geq 6$.
There exists a decomposition $\{ N_i \}$ of $M$ into countably many geometrically prime submanifolds $N_i$. 
\end{theorem}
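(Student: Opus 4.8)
The plan is to adapt, to the non-compact setting, the prime decomposition of \cite{LM20}: cut $M$ along a maximal pairwise-disjoint family of closed embedded minimal surfaces of index at most one, and then verify that each resulting piece is geometrically prime by playing Proposition~\ref{prop:index one} against Lemma~\ref{lem:frankel}. First I would record that the relevant minimal surfaces form a countable set. Let $\mathcal S$ be the collection of all closed embedded minimal surfaces $\Sigma\subset M$ which are either two-sided of index $\le 1$, or one-sided with orientation double cover of index $\le 1$. By Lemma~\ref{lem:area and diameter} (applied to $\Sigma$, respectively to its double cover), every $\Sigma\in\mathcal S$ satisfies $\Area(\Sigma)\le\frac{16\pi}{3}$ and $\diam_\Sigma\le\frac{4\pi}{3}$. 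Fix a compact exhaustion $K_1\subset K_2\subset\cdots$ of $M$. Any $\Sigma\in\mathcal S$ meeting $K_k$ lies in the relatively compact set $N_{4\pi/3}(K_k)$, has area at most $\frac{16\pi}{3}$ and index at most one; by the curvature estimates and compactness theory for embedded minimal surfaces of bounded area and index, only finitely many such surfaces can meet $K_k$ --- otherwise an infinite sequence of distinct ones would converge (smoothly, with some multiplicity) to a closed minimal surface carrying a nontrivial Jacobi field, contradicting bumpiness of $g$. Hence $\mathcal S$ is countable.

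By Zorn's Lemma choose a subfamily $\{\Sigma_j\}_{j\in J}\subseteq\mathcal S$ which is pairwise disjoint and maximal with this property; being a subset of $\mathcal S$ it is countable. Cutting $M$ along $\bigcup_j\Sigma_j$ --- a one-sided $\Sigma_j$ contributing its orientation double cover as a boundary component --- yields a decomposition $\{N_i\}$ of $M$, in the sense defined above, into countably many connected $3$-manifolds with boundary. By construction each connected component of each $\partial N_i$ is (a cover of) some $\Sigma_j$, hence a compact minimal surface of index $\le 1$, so condition (2) of geometric primeness holds; it remains to check condition (1), i.e.\ that $N_i\setminus\partial N_i$ contains no closed embedded minimal surface. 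Fix $N=N_i$ and suppose $S\subset N\setminus\partial N$ is such a surface. Being disjoint from all $\Sigma_j$ and from $\partial N$, $S$ cannot belong to $\mathcal S$ without contradicting maximality; thus $S$ is not a stable minimal surface (and, if one-sided, neither is its double cover), and, for the same reason, $N\setminus\partial N$ contains no two-sided stable embedded minimal surface and no one-sided embedded minimal surface with stable double cover.

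Now distinguish two cases. If every connected component of $\partial N$ is a stable minimal surface (in particular if $\partial N=\emptyset$), then $N$ satisfies the hypotheses of Proposition~\ref{prop:index one}; since maximality of $\{\Sigma_j\}$ rules out an index-one minimal surface in $N\setminus\partial N$, alternative (1) of that proposition is impossible, so alternative~\eqref{item:no closed minimal surface} holds --- but this asserts that $N\setminus\partial N$ contains no closed embedded minimal surface, contradicting the existence of $S$. Otherwise $\partial N$ has a component $T$ of index one. Cut $N$ along $S$: the resulting manifold (with $R\ge 6$, smooth minimal boundary, hence boundary of non-negative mean curvature) has a connected component possessing two distinct non-stable minimal boundary components --- two copies of $S$ if $S$ is two-sided and non-separating in $N$; the double cover $\tilde S$ together with $T$ if $S$ is one-sided; a copy of $S$ together with $T$ if $S$ is two-sided and separating, taking the side on which $T$ lies. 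Lemma~\ref{lem:frankel} then produces a two-sided closed stable minimal surface in the interior of that component, which lies in $N\setminus\partial N$, is disjoint from all $\Sigma_j$, and therefore belongs to $\mathcal S$ --- again contradicting maximality. In either case we reach a contradiction, so condition (1) holds and every $N_i$ is geometrically prime, which proves the theorem.

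The heart of the argument, and the step I expect to be the main obstacle, is the last one: excluding \emph{high-index} closed minimal surfaces from the interiors (low-index ones being excluded directly by maximality) is what forces the dichotomy above and the simultaneous use of Proposition~\ref{prop:index one} and Lemma~\ref{lem:frankel}, together with the elementary topological observation that a separating surface is two-sided. The remaining points requiring care, but which I expect to be routine, are the systematic treatment of one-sided surfaces via their double covers (and the corresponding versions of Lemma~\ref{lem:area and diameter}), and the bookkeeping in the countability step, where bumpiness of the metric is essential to prevent the index-$\le 1$ minimal surfaces from accumulating inside a compact region.
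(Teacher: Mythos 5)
Your proof is correct and follows essentially the same route as the paper: cut along a maximal disjoint family of closed embedded minimal surfaces of index at most one (with one-sided surfaces treated via their double covers), then rule out interior minimal surfaces by playing Proposition~\ref{prop:index one} (when all boundary components are stable) against Lemma~\ref{lem:frankel} (when some boundary component has index one). The only cosmetic differences are that your family $\mathcal S$ is slightly larger than the paper's (you also admit one-sided surfaces whose double cover has index one, whereas the paper only adds one-sided surfaces with stable double cover), you spell out the countability argument via compactness plus bumpiness where the paper merely asserts it, and you make the separating/non-separating and one-sided/two-sided case analysis in the Frankel step explicit where the paper leaves it implicit; none of this changes the substance of the argument.
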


\begin{proof}
Since the metric is bumpy there exists a countable maximal collection of disjoint minimal surfaces $\{\Sigma_i\}$, such that  each $\Sigma_i$ satisfies one of the following three possibilities: $\Sigma_i$ two-sided and stable; $\Sigma_i$ is two-sided and index $1$; or $\Sigma_i$ is one-sided and has a stable double cover. We claim that $M \setminus \bigsqcup_i  \Sigma_i$ is the desired collection of geometrically prime submanifolds.
    
Let $N$ be the metric completion of a connected component of $M \setminus \bigsqcup_i  \Sigma_i$. Then we finish the proof by considering the following two possibilities.
\begin{itemize}
\item If all boundary component of $\partial N$ are stable, or if $M = N$ (and the set $\{\Sigma_i\}$ is empty),
then by Proposition \ref{prop:index one}, $N\setminus\partial N$ does not contain any closed minimal surface, and hence $N$
is geometrically prime. 
\item If $\partial N$ has a boundary component that is a minimal surface of index $1$, we now prove the conclusion by contradiction. Suppose on the contrary that there exists a closed embedded minimal surface $\Gamma$ in $N\setminus \partial N$. Then by assumption, $\Gamma$ is either unstable or one-sided and has an unstable double cover. In either case, we cut $N$ along $\Gamma$ and let $\wti N$ be the metric completion. Then $\partial \wti N$ contains two unstable minimal surfaces of $\wti N$. By Lemma \ref{lem:frankel}, $\wti N\setminus \partial \wti N$ contains a two-sided stable minimal surface. This gives a contradiction.
\end{itemize}
\end{proof}

\section{Decomposition of geometrically prime regions}\label{sec:decomposition}
Let $M$ be three dimensional Riemannian manifolds with $R\geq 6$. In this section, $N$ is always a geometrically prime region in $M$. 
We will divide such an $N$ into compact regions with controlled diameter and boundary area. More precisely, we will divide
$N$ into subsets satisfying the following definition.

	\begin{definition}\label{def:admissible regions}
Let $N \subset M$ be a geometrically prime region, $R\geq 6$. Then a compact region $\Omega\subset N$ is {\em admissible} if 
	\begin{enumerate}
	    \item $\partial \Omega$ is closed;
        \item each connected component $\Gamma'$ of $\partial \Omega$ satisfies
    \[ \mr{diam}_N\Gamma'\leq 6\pi; \]
    moreover,
    \[ \Area(\Gamma')\leq 4\pi \text{ if $\Gamma'$ is mean concave}; \quad \Area(\Gamma')\leq \frac{16}{3}\pi \text{ if $\Gamma'$ is mean convex};\]
    \item one connected component $\Gamma$ of $\partial\Omega$ is weakly mean convex and the others are weakly mean concave; moreover,
    \[ \Area(\partial \Omega)\leq \frac{32}{3}\pi;\]
    \item for each $x\in \Omega$ the distance to the mean convex component $\Gamma$ of $\partial \Omega$ satisfies
    \[\dist_N(x,\Gamma)< 6\pi.\]
    In particular, we have $\diam_N(\Omega) < 18 \pi$.
\end{enumerate}
	\end{definition}

\begin{proposition}\label{prop:decompose N without min}
Let $N$ be a geometrically prime region. Then $N$ can be decomposed into countably many admissible regions $\{\Omega_j\}$.
\end{proposition}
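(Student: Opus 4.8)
### Proof proposal

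\textbf{Overall strategy.} The plan is to run an exhaustion argument combined with the decomposition lemmas advertised in the outline (Lemma~\ref{lem:convex extension} and its siblings), feeding into them the dichotomy of Proposition~\ref{prop:index one}. Write $N$ as an increasing union of compact sets $N = \bigcup_k K_k$ with $K_k \subset \mathrm{int}(K_{k+1})$. The goal is to build, inductively, an exhausting sequence of compact subregions $\Omega^{(1)} \subset \Omega^{(2)} \subset \cdots$ of $N$ together with a decomposition of each $\Omega^{(\ell)}$ into finitely many admissible regions, in such a way that the decomposition of $\Omega^{(\ell)}$ refines (restricts to) that of $\Omega^{(\ell-1)}$ and such that $\bigcup_\ell \Omega^{(\ell)} = N$. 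Taking the union of all the pieces produced along the way gives the desired countable admissible decomposition $\{\Omega_j\}$.

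\textbf{The inductive step.} Suppose we have produced a compact region $\Omega^{(\ell)}$ with $K_\ell \subset \Omega^{(\ell)}$, decomposed into finitely many admissible pieces, and we want to enlarge it to contain $K_{\ell+1}$. We must control the ``frontier'' — the boundary components of $\Omega^{(\ell)}$ that lie in $N \setminus \partial N$. Here one splits into the two cases of Proposition~\ref{prop:index one}. If $\partial N$ has an unstable component, the idea indicated in the introduction is: a geometrically prime region with unstable boundary can be cut by finitely many $\mu$-bubble surfaces (Lemma~\ref{lem:mu bubble}) and constrained minimizers into an admissible region plus finitely many regions each having exactly one mean convex boundary component; one then iterates the convex-extension lemma (Lemma~\ref{lem:convex extension}) to push the mean convex frontier outward, swallowing $K_{\ell+1}$ while peeling off admissible pieces of controlled size. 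If all boundary components of $N$ are stable (or $\partial N = \emptyset$), Proposition~\ref{prop:index one}\eqref{item:no closed minimal surface} applied to $\Omega = $ (a smoothing of) $N_{r}(K_{\ell+1})$ yields a two-sided embedded $C^{1,1}$ surface $\Sigma$ with mean curvature vector pointing toward $\Omega$ and of controlled size (by Lemma~\ref{lem:area and diameter}/\ref{lem:diamter for surface with boundary} one gets the area and diameter bounds built into Definition~\ref{def:admissible regions}); cutting along $\Sigma$ separates off a mean convex region (handled by the convex-extension argument) and a mean concave region, from which we again carve an admissible piece and finitely many regions with exactly one non-minimal, size-controlled boundary component — the case we have just learned to handle. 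In all cases the number of new pieces needed to cover $K_{\ell+1}$ is finite because $\overline{N_r(K_{\ell+1})}$ is compact and each admissible piece has definite ``reach'' (item (4) of Definition~\ref{def:admissible regions} gives a lower bound on how much of $N$ is absorbed at each step, since $\dist_N(x,\Gamma) < 6\pi$ forces the pieces to have uniformly bounded diameter while the $\mu$-bubble construction advances the frontier a definite distance $\tfrac{4}{3}\pi$ each time).

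\textbf{Bookkeeping and the main obstacle.} Two bookkeeping points need care. First, compatibility: when we enlarge $\Omega^{(\ell)}$ to $\Omega^{(\ell+1)}$ the surgery surfaces must be chosen disjoint from the interiors of the already-constructed admissible pieces, so that the new decomposition genuinely extends the old one; this is arranged by always performing the new constructions in the region $N \setminus \Omega^{(\ell)}$, viewed as a geometrically prime region in its own right (its new boundary component is $\partial \Omega^{(\ell)} \setminus \partial N$, which by construction is mean convex or minimal, so the hypotheses of the decomposition lemmas still apply). Second, exhaustion: one must check $\bigcup_\ell \Omega^{(\ell)} = N$, which follows since $\Omega^{(\ell)} \supset K_\ell$ by construction. \emph{The main obstacle} is the case of a geometrically prime region whose boundary is a \emph{stable} minimal surface (or empty) together with the mean \emph{concave} side produced by Proposition~\ref{prop:index one}: there one cannot simply run mean curvature flow outward, and must instead argue, as in Song's work, that after removing a suitable admissible collar the complement breaks into pieces each with a single non-minimal boundary component of controlled geometry — verifying that the geometric bounds of Definition~\ref{def:admissible regions} (particularly the combined area bound $\Area(\partial\Omega) \le \tfrac{32}{3}\pi$ in item (3), which is exactly $4\pi + \tfrac{16}{3}\pi$) survive this cutting is the delicate quantitative heart of the proof. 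The rest is a matter of assembling the three decomposition lemmas of Section~\ref{sec:decomposition} in the right order and taking a countable union.
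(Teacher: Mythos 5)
Your high-level plan (exhaust $N$ by compacts, iteratively push a controlled mean-convex frontier outward, peeling off admissible pieces) is close in spirit to the paper's argument, and you correctly identify the stable/empty-boundary case as the hard one. However, there is a genuine gap at the key technical point.

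You claim that applying Proposition~\ref{prop:index one}\eqref{item:no closed minimal surface} to a smoothing of $N_r(K_{\ell+1})$ produces a two-sided surface $\Sigma$ ``of controlled size (by Lemma~\ref{lem:area and diameter}/\ref{lem:diamter for surface with boundary}).'' This does not work. Lemmas~\ref{lem:area and diameter} and~\ref{lem:diamter for surface with boundary} give bounds for \emph{minimal} surfaces (stable or index one), whereas the surface $\Sigma$ furnished by Proposition~\ref{prop:index one}\eqref{item:no closed minimal surface} is a $C^{1,1}$ surface with non-vanishing mean curvature, and that proposition gives no area or diameter control on $\Sigma$ at all --- its proof only guarantees that $\Sigma$ touches $\partial\Omega$, not that it stays near it. Obtaining a mean-convex surface with quantitative size bounds is precisely what Lemma~\ref{lem:improved song} does, and it requires real work: a $\mu$-bubble enclosure (Lemma~\ref{lem:mu bubble}), a thickening, a far-away barrier $\mc C$ obtained from Proposition~\ref{prop:index one}, and a constrained area minimization in the region sandwiched between them, together with an argument (Claims~\ref{claim:hat Sigma connected} and~\ref{claim:minimizer intersect S}) that the minimizer genuinely meets the small surface $S$. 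Your proposal never invokes Lemma~\ref{lem:improved song} or reconstructs its mechanism, so the inductive step has no way to certify that the new frontier meets the bounds of Definition~\ref{def:admissible regions}.

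Two secondary issues. First, viewing $N \setminus \Omega^{(\ell)}$ ``as a geometrically prime region in its own right'' is not correct: its new boundary component is mean convex and non-minimal, so it fails the definition; the paper instead keeps working inside the fixed geometrically prime $N$ and applies the decomposition lemmas to compact subregions (Lemma~\ref{lem:improved song} builds a nested sequence of shells $K_j$ bounded by small mean-convex surfaces, Lemma~\ref{lem:2 bdry extension} carves each shell into one admissible piece plus finitely many mean-convex-boundary regions, and Lemma~\ref{lem:convex extension} is iterated on those). Second, you never mention Lemma~\ref{lem:2 bdry extension}, which is an essential ingredient: the shells $K_j$ have two non-minimal boundary components, one convex and one concave, and that lemma is what lets you carve them. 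Without these two ingredients the bookkeeping cannot be completed.
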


To prove Proposition \ref{prop:decompose N without min}, we will need three lemmas about existence of separating surfaces of
controlled size and mean curvature.

\subsection{Decomposition of mean convex regions}

The first lemma concerns existence of a mean convex surface of controlled size in a subset of $N$ with mean convex boundary. It gives that for a subdomain of a geometrically prime region with a mean convex boundary component (which is the unique non-minimal boundary component) one can cut off an admissible region near the mean convex boundary.

Let $(N,g)$ be a Riemannian manifold with smooth boundary. In this section, for any compact set $\mc K\subset N$, we let
\[N_r(\mc K) = \{ x \in N: \dist_N(x,\mc K) < r \}.\]
\begin{lemma}\label{lem:convex extension}
Let $N\subset M$ be a geometrically prime region. Let $\Sigma$ be a two-sided connected surface with non-vanishing mean curvature. Then $\Sigma$ separates $N$ into two connected components. Denote by $\Omega$ the one that has mean convex boundary, and suppose that $dist(x, \Sigma) > \frac{x}{\Sigma}$ for some $x \in \Omega$. Then there exists a compact connected domain $\hat \Omega \subset \Omega$ with $\Sigma \subset \partial \hat \Omega$ and satisfying the following.
\begin{enumerate}
\item $\partial \hat \Omega\setminus \partial\Omega$ is a closed and embedded surface in $N$ and has mean curvature vector pointing away from $\Sigma$;
    \item $\dist_N(\partial \hat\Omega\setminus \partial \Omega,\Sigma)\geq \frac{\pi}{4}$;
    \item $\sup\{\dist_N(x,\Sigma);x\in \hat \Omega\}< 3\pi $;
    \item Each connected component $\Gamma'\subset \partial\hat\Omega\setminus\Sigma$ satisfies $\Area(\Gamma')< \frac{7}{3}\pi$ and $\mr{diam}_N\Gamma'<6\pi$;
    \item $\Area(\partial \hat \Omega\setminus \Sigma)<\Area(\Sigma)$.
\end{enumerate}
In particular, $\hat \Omega$ is admissible provided that $\Area(\Sigma)\leq \frac{16}{3}\pi$ and $\mr{diam}_N\Sigma\leq 6\pi$.
\end{lemma}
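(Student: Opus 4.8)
The plan is to construct $\hat\Omega$ by a $\mu$-bubble argument in a collar of $\Sigma$, parallel to Lemma~\ref{lem:mu bubble}, now using that $\Sigma$ is mean convex rather than minimal. Since $N$ is geometrically prime and $\Sigma$ is a closed, two-sided, embedded surface in $N\setminus\partial N$, Remark~\ref{rmk:two-sided separation} shows $\Sigma$ separates $N$; let $\Omega$ be the component into which the mean curvature vector of $\Sigma$ points, so that $\Sigma$ is a boundary component of $\Omega$ with non-vanishing mean curvature, hence (by compactness of $\Sigma$) strictly mean convex. If $\Omega$ is too small for the construction below to produce a proper subregion, one sets $\hat\Omega=\Omega$ and checks the conclusions directly; otherwise we proceed as follows.

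On the compact set $W:=\overline{N_{3\pi}(\Sigma)}\cap\Omega$ fix a smooth $\rho\colon W\to\RR$ approximating $\dist_N(\cdot,\Sigma)$, with $|\nabla\rho|<1+\eps$, $\rho=0$ on $\Sigma$, $\rho=3\pi$ on $\partial N_{3\pi}(\Sigma)$, and put $\Omega_0:=\{\rho\le\tfrac\pi4\}$. As in Lemma~\ref{lem:mu bubble} one chooses a smooth $h$ on $\{0<\rho<3\pi\}$ that is large and positive on $\{\rho<\tfrac\pi4\}$ (blowing up at $\Sigma$), vanishes at $\rho=\tfrac\pi4$, is negative on $\{\tfrac\pi4<\rho<3\pi\}$ with $h\to-\infty$ as $\rho\to3\pi$, and on the shell $\{\rho\ge\tfrac\pi4\}$ satisfies
\[
\tfrac34 h^2+3-(1+\eps)\,|\nabla h|\ \ge\ c_0\qquad\text{for some constant }c_0>\tfrac{12}{7}.
\]
This last inequality is arranged exactly as in Lemma~\ref{lem:mu bubble} by taking $h$ on the shell to be $-c\tan$ of an affine function of $\rho$; since the shell has $\rho$-width $\tfrac{11\pi}{4}$, much larger than the width $\tfrac{4\pi}{3}$ used there, the inequality holds with room to spare, and it is not needed near $\Sigma$, since by construction the $\mu$-bubble will not enter $\{\rho<\tfrac\pi4\}$. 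By the $\mu$-bubble existence and regularity theory (\cite{CL20}*{Propositions 12 and 15}, \cite{Zhu20}; \cite{Lin-obstacle}, \cite{ZhihanWang-obstacle} for the free/constrained boundary) there is a compact minimizer $\hat\Omega$, $\Sigma\subset\partial\hat\Omega\subset W$, of
\[
\mc A^h(\Omega'):=\mc H^2(\partial\Omega'\setminus\Sigma)-\int_W(\chi_{\Omega'}-\chi_{\Omega_0})\,h\,\mr d\mc H^3,
\]
with $\partial\hat\Omega\setminus(\Sigma\cup\partial N)$ a smoothly embedded prescribed-mean-curvature surface of scalar mean curvature $h$, meeting $\partial N$ orthogonally. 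Then $h\to-\infty$ at $\rho=3\pi$ forces $\hat\Omega\subset\{\rho<3\pi\}\subset N_{3\pi}(\Sigma)$, which is item (3) (and hence $\diam_N\hat\Omega<6\pi$); $h\to+\infty$ at $\Sigma$ with $h>0$ on $\{\rho<\tfrac\pi4\}$ forces $\hat\Omega\supset\Omega_0\supset N_{\pi/4}(\Sigma)$, so $\partial\hat\Omega\setminus\partial\Omega\subset\{\rho\ge\tfrac\pi4\}$, which is item (2); the strong maximum principle against the strictly mean convex $\Sigma$ (using $h\le0$ on $\{\rho\ge\tfrac\pi4\}$) shows $\partial\hat\Omega\setminus\Sigma$ does not meet $\Sigma$; and since $h<0$ on $\{\rho\ge\tfrac\pi4\}$, the first–variation identity $H=h$ makes the mean curvature vector of $\partial\hat\Omega\setminus\partial\Omega$ point in the direction of increasing $\rho$, i.e.\ away from $\Sigma$ — item (1).

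For item (4): a component of $\partial\hat\Omega\setminus\Sigma$ lying in $\partial N$ is a stable minimal surface, so by Lemma~\ref{lem:area and diameter} it has area $\le\tfrac43\pi<\tfrac73\pi$ and diameter $\le\tfrac23\pi<6\pi$. For a prescribed-mean-curvature component $\Gamma$, the second–variation inequality for $\mc A^h$ with test function $\phi\equiv1$, combined as in Lemma~\ref{lem:mu bubble} with $|A|^2+\Ric(\mf n,\mf n)=\tfrac12(|A|^2+H^2+R)-K_\Gamma$, $|A|^2\ge\tfrac12 H^2$, $R\ge6$, $H=h$, $|\nabla h|\le(1+\eps)|\nabla\rho|\,|h'|$, and Gauss–Bonnet together with the minimality of $\partial N$, gives
\[
2\pi\chi(\Gamma)\ \ge\ \int_\Gamma\Big(\tfrac34 h^2+3-(1+\eps)|\nabla h|\Big)\,\mr d\mc H^2\ \ge\ c_0\,\Area(\Gamma),
\]
so, since $\chi(\Gamma)=2$ only when $\Gamma\cong S^2$ and $\chi(\Gamma)\le1$ when $\Gamma\cong\mb{RP}^2$ or $\Gamma$ is a disc, $\Area(\Gamma)<\tfrac{4\pi}{c_0}<\tfrac73\pi$; the bound $\diam_N\Gamma<6\pi$ follows from the $h$-weighted diameter estimate of \cite{LZ18}*{Proposition 2.2} over the collar (of width $<3\pi$), together with $\diam(\partial\Gamma)\le\tfrac23\pi$ in the disc case, exactly as in Lemma~\ref{lem:mu bubble}. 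Item (5) comes from the minimality of $\hat\Omega$: comparing with a competitor region whose non-$\Sigma$ boundary is a small inward push of $\Sigma$ — which strictly decreases area since $\Sigma$ is strictly mean convex — yields $\mc A^h(\hat\Omega)<\Area(\Sigma)$, and because $\hat\Omega\supset\Omega_0$ and $h\le0$ on $\hat\Omega\setminus\Omega_0$ one has $\mc H^2(\partial\hat\Omega\setminus\Sigma)\le\mc A^h(\hat\Omega)$, whence $\Area(\partial\hat\Omega\setminus\Sigma)<\Area(\Sigma)$. The ``in particular'' clause then follows from Definition~\ref{def:admissible regions}: items (1)–(4) give the required mean-convexity, per-component area and distance bounds (and the bounds on the component $\Sigma$ come from the hypotheses $\Area(\Sigma)\le\tfrac{16}{3}\pi$ and $\diam_N\Sigma\le6\pi$), while $\Area(\partial\hat\Omega)=\Area(\Sigma)+\Area(\partial\hat\Omega\setminus\Sigma)<2\Area(\Sigma)\le\tfrac{32}{3}\pi$.

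The main obstacle is the construction of the barrier $h$ in the second paragraph, which must simultaneously confine the $\mu$-bubble to the shell $\{\tfrac\pi4\le\rho<3\pi\}$, make its mean curvature vector point away from $\Sigma$, and satisfy the curvature inequality on that shell. The key point — in contrast with the symmetric barrier of Lemma~\ref{lem:mu bubble} — is that the inequality is not required on the part of the collar nearest $\Sigma$: the mean convexity of $\Sigma$ together with the blow-up of $h$ keeps the $\mu$-bubble out of that region, so $h$ may be chosen steep and large there while staying gentle on the wide shell where the estimate is actually used. A further delicate point is to make the competitor argument for item (5) compatible with the chosen $h$ and reference set $\Omega_0$; this is where strict mean convexity of $\Sigma$ is genuinely exploited.
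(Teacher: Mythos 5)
Your proposal takes a genuinely different route from the paper: you try to obtain $\hat\Omega$ in a single $\mu$-bubble step in the collar of $\Sigma$, with a barrier $h$ blowing up at $\Sigma$ and at $\rho=3\pi$. The paper instead uses a two-stage construction: first apply Lemma~\ref{lem:mu bubble} to get a preliminary region $\Omega'$ with controlled components $\mathcal B_j$, then inside a slightly larger domain $\wti\Omega$ run a \emph{constrained area minimization} (not a $\mu$-bubble) among surfaces homologous to $\Sigma$; the pieces $\mathcal C_j$ of $\partial\wti\Omega$ act as obstacles and the minimizer $\hat\Sigma$ is a stable minimal surface off $\bigcup\mathcal C_j$, so the diameter estimate for stable free-boundary minimal surfaces (Lemma~\ref{lem:diamter for surface with boundary}) pins $\hat\Sigma$ down, while the area bound $\Area(\hat\Sigma)<\Area(\Sigma)$ comes for free from comparing against the competitor $\Sigma$ itself. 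This two-stage structure is not cosmetic: it is what makes item (5) provable.

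The central gap in your argument is item (5). You want to deduce $\Area(\partial\hat\Omega\setminus\Sigma)<\Area(\Sigma)$ by comparing $\mathcal A^h(\hat\Omega)$ with $\mathcal A^h(\Omega'')$ where $\Omega''$ is a thin inward shell near $\Sigma$. But with your $\Omega_0=\{\rho\le\pi/4\}$ and $h>0$ on $\{\rho<\pi/4\}$, the bulk term of such a competitor is
$\mathcal A^h(\Omega'')=\mathcal H^2(\partial\Omega''\setminus\Sigma)+\int_{\Omega_0\setminus\Omega''}h$, and the second integral is a \emph{positive} additive contribution; if $h$ blows up (non-integrably) at $\Sigma$, it is $+\infty$, and even if $h$ is merely large there it swamps the small area gain $\Area(\Sigma)-\Area(\partial\Omega''\setminus\Sigma)$ from the inward push. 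So the chain ``$\mathcal A^h(\hat\Omega)\le\mathcal A^h(\Omega'')<\Area(\Sigma)$'' breaks at the second inequality. More fundamentally, the $\mu$-bubble second variation (Gauss–Bonnet) gives only a \emph{per-component} bound $\Area(\Gamma')\lesssim 4\pi/c_0$ and cannot give the \emph{total} bound $\Area(\partial\hat\Omega\setminus\Sigma)<\Area(\Sigma)$, which is exactly why the paper switches to a constrained area minimizer whose area is automatically $\le\Area(\Sigma)$, and strictly less since $\Sigma$ is strictly mean convex, hence not a minimizer. Two secondary gaps: your claim that $\hat\Omega\supset\Omega_0$ (used for item (2)) does not follow just from $h>0$ on $\{\rho<\pi/4\}$ — the minimizer's boundary can in principle live in $\{0<\rho<\pi/4\}$ so long as it stays away from $\Sigma$, and the barrier condition $h>H_{\partial\Omega_0}$ at $\rho=\pi/4$ is not arranged (you set $h=0$ there); and the $6\pi$ diameter bound for a $\mu$-bubble leaf over a collar of width $\approx\frac{11\pi}{4}$ is asserted by citing the $h$-weighted diameter estimate but the computation that the resulting bound is actually below $6\pi$ is not carried out and does not obviously follow by the ``room to spare'' remark.
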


\begin{proof}
Since $N$ is geometrically prime, then $N\setminus \partial N$ does not contain minimal surfaces. Note that $\Sigma= \partial \Omega\setminus \partial N$ is mean convex. 
Since $\Sigma$ is mean convex as a boundary component of $\Omega$, then by Lemma \ref{lem:frankel}, each connected component of $\Omega\cap \partial N$ is a stable minimal surface. Then by Lemma \ref{lem:area and diameter}, such a component has area bounded by $\frac{4}{3} \pi$ and diameter bounded by $\frac{2}{3}\pi$. 
By Lemma \ref{lem:mu bubble}, there exists a compact region  
\[\Omega'\subset \{x\in \Omega;\dist_N(x,\Sigma)<\frac{4}{3}\pi\}\]
with $\Sigma \subset \partial \Omega'$ and so that each connected component $\Gamma'\subset \partial \Omega'\setminus \partial \Omega$ is either an $S^2, \mb {RP}^2$ with
\[ \Area(\Gamma')<2 \pi, \quad \mr{diam}_N(\Gamma')\leq \pi,\]
or a free boundary disc with
\[ \Area(\Gamma')< \pi, \quad \mr{diam}_N(\Gamma')\leq \frac{8}{3}\pi.\]

Denote by $\{\mc B_j\}$ the connected components of $\partial \Omega'\setminus \partial\Omega$. Note that each $\mc B_j$ separates $\Omega$. Let $E_j$ denote  the metric completion of the connected component of $\Omega\setminus\mc B_j$ that does not contain $\Sigma$.

We can take a set $\wti \Omega$, $N_{\frac{2}{3}\pi + \frac{1}{4}\pi}(\Omega') \cap \Omega \subset \wti \Omega \subset N_{\frac{5}{3}\pi}(\Omega') \cap \Omega$, so that 
$\partial \wti \Omega \setminus \partial \Omega$ is a smooth surface. 
Moreover, we perturb $\wti\Omega$ so that each connected component of $\partial N$ is either contained in $\partial \wti\Omega$ or disjoint from it. Let $\mc C_j$ denote the union of connected 
components of $\partial\wti \Omega \setminus \partial \Omega$ that lie in $E_j$.
It follows that for all $\mc C_i$ and $\mc B_j$,
\begin{equation}\label{eq:dist BC}
    \dist_N(\mc C_j,\mc B_i)\geq \frac{2}{3}\pi+\frac{1}{4}\pi.
\end{equation} 

\begin{figure}[ht]
    \begin{center}
\def\svgwidth{1\columnwidth}
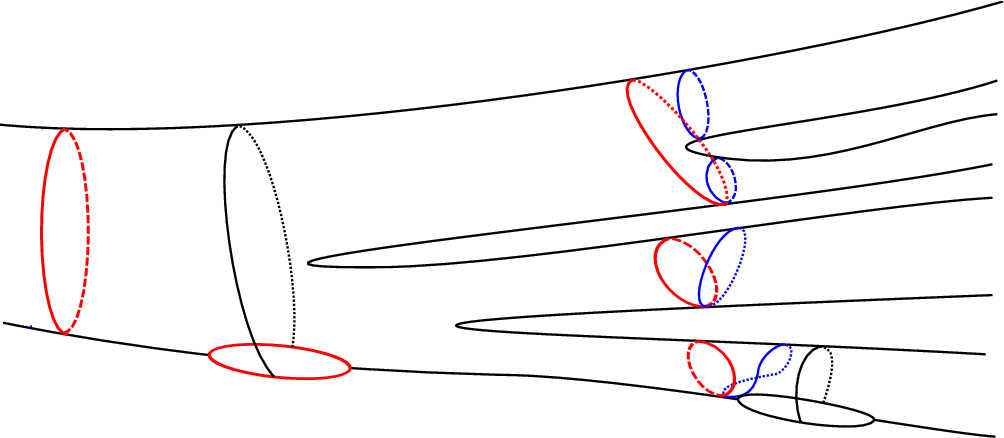
\caption{Cutting a mean convex region}
\label{fig:cutting near mean convex boundary}
\end{center}
\end{figure}

Let $\hat\Sigma$ be an area minimizer among all surfaces in $\wti \Omega$ that are homologous to $\Sigma$; see Figure \ref{fig:cutting near mean convex boundary}. Then $\hat \Sigma$ is a $C^{1,1}$ weakly mean concave surface (see \citelist{\cite{Lin-obstacle}\cite{ZhihanWang-obstacle}*{Section 2.4}} for the regularity). Moreover, $\hat\Sigma\setminus (\cup_j\mc C_j)$ is a stable minimal surface. Thus for each $p\in \hat\Sigma\setminus \partial N$, one has
\[ \dist_N(p,\cup_j \mc C_j)\leq \frac{2}{3}\pi.\]
This together with \eqref{eq:dist BC} implies that $\hat \Sigma$ does not intersect $\mc B_j$. Let $\hat\Omega$ be the connected component of $\Omega\setminus \hat \Sigma$ that contains $\Sigma$. Then for each connected component of $\partial\hat\Omega \setminus \Sigma$, it is either a closed embedded minimal surface or a mean concave surface intersecting $\cup \mc C_j$. Observe that for the first case, it is a connected component of $\partial N$. Thus it suffices to prove the diameter and area upper bounds for the second case.

Let $\Gamma'$ be a connected component of $\hat \Sigma$ that intersects $E_j$. Since $\Gamma'$ does not intersect $\mc B_j$ we have that $\Gamma'\subset E_j$. Moreover,
\[ \Gamma'\subset E_j\cap \wti\Omega.\]
It follows that 
\[ \mr{diam}_N\Gamma'< \mr{diam}_N\mc B_j+\frac{5}{3}\pi+\frac{5}{3}\pi\leq 6\pi.\]
The area upper bound for $\partial \wti\Omega \setminus \Sigma$ follows from $\Area(\hat \Sigma)<\Area(\Sigma)$.
The area upper bound for each connected component $\Gamma'$ of $
\hat \Sigma \setminus \partial \Omega $ follows by comparing 
the area of $\Gamma'$ to the area of a connected component $\mc B_{E_j}$ of $\partial E_j$ that separates $\Gamma'$ from $\Sigma$.
Recall that $\mc B_{E_j}$ is either a boundary component of $\partial \Omega'$ of area $< 2 \pi$, or
a union of a disc in $\partial \Omega' \setminus \partial \Omega$ and part of a component in $\partial \Omega$ of area
$<\pi + \frac{4}{3} \pi < \frac{7}{3}\pi$.

Moreover,
\[\dist_N(\Gamma',\Sigma)\geq \inf_i\dist_N(\mc C_i,\Sigma)-\frac{2}{3}\pi> \frac{\pi}{4}.\]
The third item comes from the fact of $\hat\Omega\subset\wti\Omega$. This completes the proof of Lemma \ref{lem:convex extension}.
\end{proof}

Given a geometrically prime region $N$ with one boundary component of Morse index $1$,
it is straightforward to decompose it into admissible regions
using Lemma \ref{lem:convex extension} repeatedly. But how can we decompose
a geometrically prime region that only has stable boundary components (or no boundary at all)?
The next two lemmas deal with this situation.

In the second lemma we consider a domain $\Omega\subset N$ with two non-minimal boundary components. Suppose that one of them is mean convex and the other is mean concave. We will show that $\Omega$ can be divided by a collection of closed surfaces into finitely many domains with mean convex boundaries and a compact region. Moreover, all of these surfaces have uniformly bounded area and diameter.
\begin{lemma}\label{lem:2 bdry extension}
Let $N$ be a geometrically prime region of $M$ with $R\geq 6$. Let $\Omega_1\subset \Omega_0 \subset N$ be two regions so that $\partial \Omega_i\setminus \partial N$ is a closed connected surface $\Sigma_i$ and let $\Omega = \Omega_0 \setminus \Omega_1$. Suppose that $\Sigma_i$ is mean convex as part of $\partial \Omega_i$ for $i=0,1$. Denote by
\[
d_0=\sup\{\dist_N(x,\Sigma_0);x\in\Sigma_1\}.
\]
Then there exists a compact connected domain $\hat \Omega\subset \Omega$ with $\partial \hat \Omega$ containing $\Sigma_0,\Sigma_1$ and satisfying the following.
\begin{enumerate}
    \item $\partial \hat \Omega\setminus \partial\Omega$ is a closed and embedded surface in $N$ and has mean curvature vector pointing away from $\Sigma_0$;
    \item $\sup\{\dist_N(x,\Sigma_0);x\in \hat\Omega\}< d_0+3\pi $;
    \item Each connected component $\Gamma'\subset \partial\hat\Omega\setminus(\Sigma_0\cup\Sigma_1)$ satisfies $\Area(\Gamma')< \frac{7}{3}\pi$ and $\mr{diam}_N\Gamma'<6\pi$;
    \item $\Area(\partial \hat\Omega\setminus (\Sigma_0\cup\Sigma_1))< \Area(\Sigma_0)$.
\end{enumerate}
In particular, $\hat \Omega$ is admissible provided that $d_0\leq 3\pi$ and $\Area(\Sigma_i)< 4\pi$, $\mr{diam}_N\Sigma_i\leq 6\pi$, $i=0,1$.
\end{lemma}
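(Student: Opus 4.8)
The plan is to mimic the proof of Lemma~\ref{lem:convex extension} almost verbatim, the one new feature being the mean-concave boundary component $\Sigma_1$ of $\Omega$ (its mean curvature vector points into $\Omega_1$, since $\Sigma_1$ is mean convex as part of $\partial\Omega_1$). The device for handling $\Sigma_1$ is to run the minimization step inside $\Omega_0$: because $\Omega_1\subset\Omega_0$, the surface $\Sigma_1$ is null-homologous there, so the cut is again controlled by the homology class of the mean-convex surface $\Sigma_0$ alone. As in Lemma~\ref{lem:convex extension}, Lemma~\ref{lem:frankel} (applied with the non-stable-minimal component $\Sigma_0$) together with geometric primeness of $N$ shows that each connected component of $\partial N\cap\overline\Omega$ is a stable minimal surface, hence of area $\le\frac{4}{3}\pi$ and diameter $\le\frac{2}{3}\pi$ by Lemma~\ref{lem:area and diameter}.

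First I would pick a compact connected region $\mc K\subset\Omega$ with $\Sigma_0\cup\Sigma_1\subset\partial\mc K$ and $\mc K\subset N_{d_0}(\Sigma_0)$ (possible since $\Sigma_1\subset N_{d_0}(\Sigma_0)$), and apply Lemma~\ref{lem:mu bubble} to $\mc K$, as in the proof of Lemma~\ref{lem:convex extension}, to obtain a compact region $\Omega'$ with $\mc K\subset\Omega'\subset N_{\frac{4}{3}\pi}(\mc K)\cap\Omega$, with $\Sigma_0,\Sigma_1\subset\partial\Omega'$, and with each connected component $\mc B_j$ of $\partial\Omega'\setminus\partial\Omega$ an $S^2$ or $\mb{RP}^2$ of area $<2\pi$, diameter $\le\pi$, or a free boundary disc of area $<\pi$, diameter $\le\frac{8}{3}\pi$; since $\mc K$ contains a one-sided neighbourhood of all of $\Sigma_1$, the surface $\Sigma_1$ lies deep inside $\Omega'$. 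Then, exactly as in Lemma~\ref{lem:convex extension}, I would choose $\wti\Omega$ with $N_{\frac{2}{3}\pi+\frac{1}{4}\pi}(\Omega')\cap\Omega\subset\wti\Omega\subset N_{\frac{5}{3}\pi}(\Omega')\cap\Omega$, $\partial\wti\Omega$ smooth and compatible with $\partial N$, so that $\wti\Omega\subset N_{d_0+3\pi}(\Sigma_0)$; for each $j$ let $E_j$ be the metric completion of the component of $\Omega\setminus\mc B_j$ not containing $\Sigma_0$ (it also avoids $\Sigma_1$), and $\mc C_j$ the union of the components of $\partial\wti\Omega\setminus\partial\Omega$ lying in $E_j$, so that $\dist_N(\mc C_j,\mc B_i)\ge\frac{2}{3}\pi+\frac{1}{4}\pi$ for all $i,j$.

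Next I would cap off $\Omega_1$: attach along $\Sigma_1$ a compact region $C$ with $\partial C=\Sigma_1$ that is mean convex as part of $\partial C$ (one may take $C=\Omega_1$ if $\Omega_1$ is compact), and set $\Omega^+=\wti\Omega\cup_{\Sigma_1}C$. Let $\hat\Omega_0$ minimize $\mc H^2(\partial W\setminus(\Sigma_0\cup\partial N))$ among compact regions $W$ with $\Omega'\subset W\subset\Omega^+$ (regularity as in \citelist{\cite{Lin-obstacle}\cite{ZhihanWang-obstacle}}). Replacing $W$ by $W\cup C$ removes $\Sigma_1$ from $\partial W$ and strictly decreases the functional, so $C\subset\hat\Omega_0$; hence $\Sigma_1$ and $C$ lie in the interior of $\hat\Omega_0$, and $\hat\Sigma:=\partial\hat\Omega_0\setminus(\Sigma_0\cup\partial N)$ is a closed embedded $C^{1,1}$ surface, disjoint from $\Sigma_1$ and from $C^\circ$ — hence contained in $\overline\Omega$ — that is weakly mean concave as a boundary component of $\hat\Omega_0$ (its mean curvature vector points away from $\Sigma_0$) and whose part outside $\bigcup_j\mc C_j\cup\partial N$ is a stable minimal surface. (That $\hat\Sigma$ avoids $\Omega_1^\circ$ can also be seen directly from the maximum principle, $\Sigma_1$ being mean convex as part of $\partial\Omega_1$.) By Lemma~\ref{lem:diamter for surface with boundary}, every point of $\hat\Sigma\setminus\partial N$ is within $\frac{2}{3}\pi$ of $\bigcup_j\mc C_j$, which together with the distance bound above forces $\hat\Sigma$ to be disjoint from every $\mc B_j$. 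Finally I would set $\hat\Omega$ to be the connected component containing $\Sigma_0$ of the metric completion of $\hat\Omega_0\setminus C$; then $\Sigma_0,\Sigma_1\subset\partial\hat\Omega$, $\partial\hat\Omega\setminus\partial\Omega=\hat\Sigma$, and $\Sigma_0$ is mean convex while $\Sigma_1$ is mean concave as boundary components of $\hat\Omega$.

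The estimates are then read off exactly as in Lemma~\ref{lem:convex extension}: (1) is immediate; (2) follows from $\hat\Omega\subset\wti\Omega\subset N_{d_0+3\pi}(\Sigma_0)$; for (3), a component $\Gamma'$ of $\partial\hat\Omega\setminus(\Sigma_0\cup\Sigma_1)$ is either a stable minimal piece of $\partial N$ (area $\le\frac{4}{3}\pi$, diameter $\le\frac{2}{3}\pi$) or a component of $\hat\Sigma$ which, being disjoint from all $\mc B_i$, lies in some $E_j\cap\wti\Omega$, whence $\diam_N\Gamma'\le\diam_N\mc B_j+\frac{5}{3}\pi+\frac{5}{3}\pi\le6\pi$, and comparing $\Gamma'$ with the component of $\partial E_j$ separating it from $\Sigma_0$ (a boundary sphere of $\Omega'$ of area $<2\pi$, or a disc of area $<\pi$ together with a piece of a stable-minimal component of $\partial N$ of area $\le\frac{4}{3}\pi$) gives $\Area(\Gamma')<\frac{7}{3}\pi$; (4) follows by comparing the minimizer with $\Sigma_0$ — in $\Omega^+$ the cap $C$ is null-homologous, so $\hat\Sigma$ is homologous to $\Sigma_0$, which is strictly mean convex and hence not area-minimizing in its class. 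Summing these as in Definition~\ref{def:admissible regions} yields the ``in particular'' clause. The main obstacle will be the minimization step: one must arrange simultaneously that the minimizer engulfs $\Omega_1$ (so $\Sigma_1\subset\partial\hat\Omega$), stays out of $\Omega_1^\circ$ (so $\hat\Sigma\subset\overline\Omega$), misses the $\mu$-bubble surfaces $\mc B_j$, and still admits $\Sigma_0$ as an area competitor; once this is in place the remaining diameter and area bookkeeping is identical to that of Lemma~\ref{lem:convex extension}.
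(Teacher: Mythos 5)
Your proposal is essentially correct and follows the same skeleton as the paper's proof (apply Lemma~\ref{lem:mu bubble} to get $\Omega'$ and $\mc B_j$, thicken to $\wti\Omega$ to get $\mc C_j$ at distance $\ge\frac{2}{3}\pi+\frac{1}{4}\pi$ from the $\mc B_i$, run a constrained/obstacle minimization, and invoke Lemma~\ref{lem:diamter for surface with boundary} to show $\hat\Sigma$ must hug $\bigcup\mc C_j$), but the device you use to deal with the mean-concave boundary component $\Sigma_1$ is genuinely different from the paper's. You work entirely inside $\Omega$, keep $\Sigma_1$ as a boundary component of $\wti\Omega$, and then glue an artificial compact mean-convex cap $C$ along $\Sigma_1$, minimizing in the auxiliary manifold $\Omega^+=\wti\Omega\cup_{\Sigma_1}C$; the engulfing step (replace $W$ by $W\cup C$, or invoke the maximum principle at $\Sigma_1$) forces the minimizer to contain $C$ and hence $\hat\Sigma$ to avoid $\Omega_1$ and $\Sigma_1$. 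The paper instead stays entirely inside the given manifold: it defines $\wti\Omega$ as a subset of $\Omega_0$ (not $\Omega$), so $\wti\Omega$ \emph{does} extend into $\Omega_1$ and $\Sigma_1$ sits in the interior of $\wti\Omega$; the portion of $\partial\wti\Omega$ inside $\Omega_1$ is singled out as an extra obstacle piece $\wti{\mc C}=\Omega_1\cap\partial\wti\Omega\setminus\partial\Omega_0$, and the same Schoen--Yau estimate ($\dist_N(p,\bigcup_j\mc C_j\cup\wti{\mc C})\le\frac{2}{3}\pi$ for $p\in\hat\Sigma$ in the interior of $\Omega_0$, combined with $\dist_N(\wti{\mc C},\Sigma_1)\ge\frac{2}{3}\pi+\frac{1}{4}\pi$) shows that $\hat\Sigma$ avoids $\Sigma_1$. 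So where you introduce an external cap and an engulfing argument, the paper introduces the extra obstacle surface $\wti{\mc C}$ and re-uses the distance estimate; both correctly handle the new difficulty of this lemma over Lemma~\ref{lem:convex extension}. Your version is conceptually parallel to Lemma~\ref{lem:convex extension} and avoids having to track $\wti{\mc C}$, at the cost of building the cap $C$ (one must check that a smooth cap with the right metric and mean-curvature sign exists, and that the modified metric does not affect the $R\ge 6$ estimates --- which you do address by noting $\hat\Sigma$ lies away from $C$); the paper's version avoids any auxiliary metric modification. The remaining bookkeeping (diameter bound via $\diam_N\mc B_j+\frac{5}{3}\pi+\frac{5}{3}\pi$, area bound via comparison with $\partial E_j$ components, and $\Area(\hat\Sigma)<\Area(\Sigma_0)$ by minimality) is identical in both.
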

\begin{proof}
The argument is similar to Lemma \ref{lem:convex extension} with minor modifications.
Apply Lemma \ref{lem:mu bubble} to the compact set $N_{d_0}(\Sigma_0) \cap \Omega$; after intersecting with $\Omega$ we obtain a  compact region $\Omega'$, such that 
\[ N_{d_0}(\Sigma_0) \cap \Omega \subset \Omega' \subset N_{d_0+\frac{4\pi}{3}}(\Sigma_0) \cap\Omega,\]
so that each connected component $\Gamma'\subset \partial \Omega'\setminus \partial \Omega$ is either an $S^2, \mb {RP}^2$ with
\[ \Area(\Gamma')<2 \pi, \quad \mr{diam}_N(\Gamma')\leq \pi.\]
or a free boundary disc with
\[ \Area(\Gamma')< \pi, \quad \mr{diam}_N(\Gamma')\leq \frac{8}{3}\pi.\]
Denote by $\{\mc B_j\}$ the connected component of $\partial \Omega'\setminus \partial \Omega$. 
Note that $\mc B_j$ separates $\Omega_0$. Denote by $E_j$ the connected component of $\Omega_0\setminus \mc B_j$ that does not contain $\Sigma_0$. 
Observe that each connected component of $\partial N$ has diameter bounded by $\frac{2}{3}\pi$. We can take a set $\wti \Omega$, $N_{\frac{2}{3}\pi + \frac{1}{4}\pi}(\Omega') \cap \Omega_0 \subset \wti \Omega \subset N_{\frac{5}{3}\pi}(\Omega') \cap \Omega_0$, so that 
$\partial \wti \Omega \setminus \partial \Omega$ is a smooth surface (possibly with boundary in $\partial \Omega_0$).
Let 
\[\mc C_j=E_j\cap \partial \wti\Omega\setminus \partial \Omega_0, \quad  \wti{\mc C}=\Omega_1\cap \partial \wti \Omega\setminus \partial \Omega_0.\]
Note that $\mc C_j$ and $\wti{\mc C}$ may have boundary on $\partial \Omega_0$.
It follows that for all $\mc C_i$ and $\mc B_j$,
\begin{equation}\label{eq:dist BC and Sigma}
    \dist_N(\mc C_j,\mc B_i)\geq \frac{2}{3}\pi + \frac{1}{4}\pi, \quad \dist_N(\wti{\mc C},\Sigma_1)\geq \frac{2}{3}\pi + \frac{1}{4}\pi.
\end{equation} 
\begin{figure}[ht]
	\begin{center}
		\def\svgwidth{1\columnwidth}	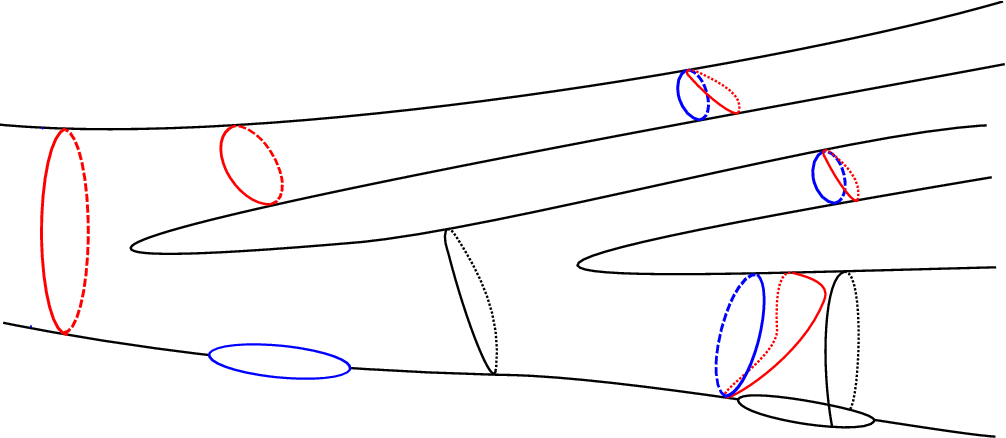
	\caption{Cutting the region bounded by two given surfaces}
		\label{fig:cutting region two boundary}
	\end{center}
\end{figure}

Let $\hat\Sigma$ be the area minimizer among all surfaces in $\wti \Omega$ that are homologous to $\Sigma_0$; see Figure \ref{fig:cutting region two boundary}. Then $\hat \Sigma$ is a $C^{1,1}$ weakly mean concave surface (see \citelist{\cite{Lin-obstacle}\cite{ZhihanWang-obstacle}*{Section 2.4}} for the regularity). Moreover, $\hat\Sigma\setminus (\cup_j\mc C_j\cup \wti{\mc C})$ is a stable minimal surface with respect to the normal vector field pointing away from $\Sigma_0$. Since $\hat \Sigma$ is a stable minimal surface outside $\cup\mc C_j\cup\mc C$, then by Lemma \ref{lem:diamter for surface with boundary}, for each $p\in \hat\Sigma$ lying in the interior of $\Omega_0$, one has
\[ \dist_N(p,\cup_j \mc C_j\cup\wti{\mc C})\leq \frac{2}{3}\pi.\]
Thus by \eqref{eq:dist BC and Sigma}, $\hat \Sigma\setminus \partial\Omega_0$ does not intersect $\Sigma_1$ or the interior of $\mc B_j$. Let $\hat\Omega$ be the connected component of $\Omega \setminus \hat \Sigma$ that contains $\Sigma_0$. Then for each connected component of $\partial\hat\Omega \setminus \partial \Omega$, it is either a closed embedded minimal surface or a weakly mean concave surface intersecting $\cup \mc C_j$. Observe that for the first case, it is a connected component of $\partial N$. The second item comes from the fact of $\hat\Omega\subset\wti\Omega$. The last item follows from that $\Area(\hat \Sigma)<\Area(\Sigma)$. The area and diameter estimates for each connected component of $\partial \hat \Omega\setminus (\Sigma_0\cup\Sigma_1)$ follows from the same argument in Lemma \ref{lem:convex extension}. This completes the proof of Lemma \ref{lem:2 bdry extension}.
\end{proof}

Let $\Sigma$ be a two-sided surface that separates $N$ into two subsets $\Omega$ and
$N \setminus \Omega$ and assume the mean curvature vector of $\Sigma$ is pointing 
away from $\Omega$.
Recall that by Proposition \ref{prop:index one}, there exists another two-sided surface in $\Omega $ with mean curvature vector pointing towards $\Sigma$. Assuming that $N$ has $R\geq 6$ we give an improvement: the new surface has controlled area and diameter. 
\begin{lemma}\label{lem:improved song}
Let $N$ be a geometrically prime region of $M$ with $R\geq 6$. 
Let $\Omega\subset N$ be a non-compact region so that $\Sigma:=\partial \Omega\setminus \partial N$ is a closed connected embedded mean concave surface and $\partial N \cap \partial \Omega$
is a collection of stable minimal surfaces.
Then there exists a connected closed embedded surface $\Sigma_1\subset\Omega$ such that 
\begin{enumerate}
    \item $\Sigma_1$ has mean curvature vector pointing towards $\Sigma$;
    \item $\mr{diam}_N\Sigma_1<5\pi$ and $\Area(\Sigma_1)<\frac{8\pi}{3}$;
    \item $\dist_N(\Sigma,\Sigma_1)\geq \frac{\pi}{3}$, $\sup\{\dist_N(x,\Sigma);x\in \Sigma_1\}< 3\pi.$
\end{enumerate}
\end{lemma}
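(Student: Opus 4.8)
The plan is to adapt the non-compact construction of Gromov and Song used in Proposition \ref{prop:index one} — running mean curvature flow from a large region around $\Sigma$ and pushing towards $\Sigma$ — and to graft onto it the $\mu$-bubble estimates of Lemma \ref{lem:mu bubble} so that the surface produced comes with the area and diameter bounds. Two features of the geometrically prime region $N$ do the work: it contains no closed minimal surface in $N\setminus\partial N$, which rules out the ``stable minimal barrier'' alternative of White's dichotomy and keeps us in the touching regime; and, since $\partial N\cap\Omega=\partial N\cap\partial\Omega$ consists of stable minimal surfaces, all the $\mu$-bubble computations remain valid along the portion of $\partial N$ that the construction can meet.

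First I would fix a small $\eps>0$ and set up the scales: let $\Omega_0\subset\Omega$ be a smoothing of $N_{\pi/3+\eps/2}(\Sigma)\cap\Omega$ with $N_{\pi/3}(\Sigma)\cap\Omega\subset\Omega_0\subset N_{\pi/3+\eps}(\Sigma)\cap\Omega$ and with each component of $\partial N$ contained in $\partial\Omega_0$ or disjoint from it; since $N$ is complete and $\Sigma$ compact, $\mc K:=\overline{N_{3\pi-\eps}(\Sigma)}\cap\Omega$ is compact, and I smooth it with the same $\partial N$-compatibility. Following Song, I would modify $g$ in a thin collar of $\partial\mc K\setminus\partial\Omega$ to make that surface mean convex, run mean curvature flow $\{\partial\mc K_t\}$ from $\mc K_0=\mc K$, and invoke White's theorem \cite{White2000}: either the flow touches $\partial\Omega_0$ at some first time $\hat t$, or there is a stable minimal barrier; the latter would be a closed minimal surface in the unmodified region (contradicting geometric primeness, after using the distance estimate of Lemma \ref{lem:diamter for surface with boundary} to localise it away from the collar) or would sit in the mean convex collar (impossible), so we are in the touching case and $\Omega_0\subset\mc K_{\hat t}\subset\mc K$.

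Next, at the touching time I would solve a constrained $\mu$-bubble minimization: with a Lipschitz $\rho$ that approximates the signed distance to $\partial\Omega_0$, satisfies $|\nabla\rho|<1+\eps$, $\rho\equiv 0$ on $\partial\Omega_0$ and $\rho\equiv\frac23\pi$ on $\partial\mc K\setminus\partial\Omega$, and with $h$ the usual $\tan$-type prescription built from $\rho$ (normalised, as in Lemma \ref{lem:mu bubble}, so that a minimizer $\hat\Omega$ of $\mc H^2(\partial\Omega'\setminus\partial N)-\int(\chi_{\Omega'}-\chi_{\Omega_0})h$ over $\Omega_0\subset\Omega'\subset\mc K_{\hat t}$ exists by \cite{CL20}*{Propositions 12 and 15}, with $\partial\hat\Omega\setminus\partial N$ staying off $\partial\mc K_{\hat t}$), the constraint $\hat\Omega\supset\Omega_0$ forces the free part of $\partial\hat\Omega$ to lie where $\rho\ge 0$, hence where the prescribed mean curvature is $\ge 0$ relative to the normal pointing out of $\hat\Omega$. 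As in the proof of Proposition \ref{prop:index one}, one extracts from $\partial\hat\Omega$ a free prescribed-mean-curvature component $\Sigma_1$ meeting $\partial N$ orthogonally; the degenerate possibilities ($\hat\Omega$ coinciding with $\Omega_0$ or with $\mc K_{\hat t}$, or having only obstacle-constrained components) are ruled out using the $\tan$-blowup of $h$ near $\partial\mc K_{\hat t}$, a generic choice of $\Omega_0$, and geometric primeness (a closed $g$-stable minimal piece of $\partial\hat\Omega$ is forbidden). Since $\hat\Omega\supset\Omega_0$ contains a one-sided collar of $\Sigma$, the mean curvature vector of $\Sigma_1$ points towards $\Sigma$ (strictly where $\rho>0$, which is nonempty because $\Sigma_1\not\subset\partial\Omega_0$), giving (1); every point of $\Sigma_1\subset\partial\hat\Omega$ lies at distance $\ge\pi/3$ from $\Sigma$ because $\hat\Omega\supset N_{\pi/3}(\Sigma)\cap\Omega$, and $\Sigma_1\subset\mc K_{\hat t}\subset N_{3\pi-\eps}(\Sigma)$, giving (3). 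Testing the stability of $\Sigma_1$ against $\phi\equiv 1$ exactly as in the proof of Lemma \ref{lem:mu bubble} — using $|A|^2\ge\frac12 H^2$, $R\ge6$, $|\nabla h|\le\frac34(1+\eps)^2+\frac34 h^2$, Gauss-Bonnet, and minimality of the stable components of $\partial N$ that $\Sigma_1$ can touch — yields $2\pi\chi(\Sigma_1)>2\Area(\Sigma_1)$, so $\Sigma_1$ is an $S^2$, an $\mb{RP}^2$ or a free-boundary disc with $\Area(\Sigma_1)<2\pi<\frac83\pi$ and, by the diameter argument of Lemma \ref{lem:mu bubble} together with the $\frac23\pi$ diameter bound for stable components of $\partial N$ (Lemma \ref{lem:area and diameter}), $\mr{diam}_N\Sigma_1\le\frac83\pi<5\pi$, so (2) holds with room to spare.

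The step I expect to be the main obstacle is reconciling the $\mu$-bubble machinery with the fact that $\Sigma$ itself is \emph{not} minimal: the area, diameter and stability estimates all require minimal (or at least barrier) boundary, and a naive $\mu$-bubble or area minimizer in $\Omega$ could land on or near $\Sigma$ where none of this is available, and could moreover have its mean curvature vector pointing the wrong way. The device that resolves this is to ``wall $\Sigma$ off'' inside the obstacle region $\Omega_0=N_{\pi/3+\eps}(\Sigma)\cap\Omega$: imposing $\Omega'\supset\Omega_0$ keeps every competitor — and hence $\Sigma_1$ — at distance $\ge\pi/3$ from $\Sigma$, allows $\Sigma_1$ to meet only the stable minimal part of $\partial N$, and, through the sign of the prescribed mean curvature on $\mc K_{\hat t}\setminus\Omega_0$, pins the mean curvature vector of $\Sigma_1$ towards $\Sigma$. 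The remaining points — ruling out the minimal-barrier alternative by primeness, the compactness of $\mc K$, and the existence of a genuine free component of $\partial\hat\Omega$ — are then routine.
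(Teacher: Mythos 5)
Your proposal replaces the paper's two-stage construction --- a free $\mu$-bubble shell $\Omega'$ produced by Lemma~\ref{lem:mu bubble}, thickened to $\wti\Omega$, followed by a \emph{pure area} minimization in $\wti{\mc K}$ against a far-away mean-convex barrier $\mc C$ supplied by Proposition~\ref{prop:index one} --- with a single obstacle-constrained $\mu$-bubble sitting between $\Omega_0 = N_{\pi/3}(\Sigma)\cap\Omega$ and a mean-curvature-flow level set $\mc K_{\hat t}$, reading off the area and diameter bounds from the $\phi\equiv 1$ stability computation as in Lemma~\ref{lem:mu bubble}. This is a genuinely different route, and the sign-of-mean-curvature argument via the one-sided constraint is sound, but the size estimate has a real gap. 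Because your prescription $h$ vanishes on $\partial\Omega_0$ instead of blowing down there (necessarily so, since you need $h\geq 0$), the minimizer $\hat\Omega$ is permitted to touch the inner obstacle. On the contact set the one-sided variational problem yields only an inequality, not the Euler--Lagrange equation, so the second-variation inequality holds only for test functions vanishing on the contact set; substituting $\phi\equiv 1$ is not justified unless $\Sigma_1$ is entirely free of $\partial\Omega_0$. You assert that ``a generic choice of $\Omega_0$'' and ``geometric primeness'' rule out the degenerate cases, but neither forces the contact set to be empty: if $\partial\Omega_0$ is strictly mean convex near a point the minimizer will cling to it there, and if $\partial\Omega_0$ is mean convex throughout you could even have $\hat\Omega=\Omega_0$ (since $h=0$ at $\rho=0$), leaving no free component at all. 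The paper sidesteps this entirely: the $\mu$-bubble $\Omega'$ has $h\to\pm\infty$ at both walls so $\partial\Omega'$ is strictly free and the topological estimate is applied to \emph{it}; the final surface $\Sigma_1$ is then an area minimizer forced (by $\mc C$ together with primeness, via Claims~\ref{claim:hat Sigma connected} and \ref{claim:minimizer intersect S}) to meet the small closed surface $S\subset\partial\wti\Omega_0$, and its bounds follow from the elementary comparisons $\Area(\Sigma_1)\leq\Area(S)<\frac{8\pi}{3}$ and $\mr{diam}_N\Sigma_1 \leq \mr{diam}_N S + 2\cdot\frac{2\pi}{3}<5\pi$ (the latter using Lemma~\ref{lem:diamter for surface with boundary} on the stable minimal pieces of $\Sigma_1\setminus\wti\Omega$), with no stability inequality ever tested on $\Sigma_1$.

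There is a secondary issue in your mean curvature flow step. You dismiss White's minimal-barrier alternative on the grounds that a stable minimal surface ``in the unmodified region contradicts geometric primeness'' or ``would sit in the mean convex collar (impossible)''; but a $\wti g$-stable minimal surface trapped in the modified collar $N^g_{\pi/2}(\partial\mc K\setminus\partial N)$ contradicts neither --- $\wti g\neq g$ there, the scalar curvature lower bound may fail there, and mean convexity of a collar's outer boundary does not exclude stable minimal surfaces in its interior. The proof of Proposition~\ref{prop:index one} handles exactly this by an iterated min-max plus bumpiness argument inside the collar, and that iteration (or, more simply, a direct citation of Proposition~\ref{prop:index one} as the paper does) is required rather than optional.
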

\begin{proof}
Note that by Lemma \ref{lem:area and diameter}, each connected component of $\partial N$ satisfies
\begin{equation}\label{eq:stable partial N bound}
    \mr{diam}_N\leq \frac{2\pi}{3}, \quad \Area \leq \frac{4\pi}{3}.
    \end{equation}
By Lemma \ref{lem:mu bubble}, there exists a  compact region $\Omega'$,
\begin{equation}\label{eq:Omega' range}
N_{\frac{\pi}{3}}(\Sigma)\cap\Omega \subset  \Omega' \subset N_{\frac{5\pi}{3}}(\Sigma)\cap\Omega,
\end{equation}
so that each connected component $\Gamma'\subset \partial \Omega'\setminus \partial N$ is one of the following cases:
\begin{itemize}
    \item $\Gamma'$ is an $S^2$ or $\mb {RP}^2$ and 
    \[  \Area(\Gamma')<2\pi, \quad \mr{diam}_N(\Gamma')\leq \pi;\]
    \item $\Gamma'$ is a disc and 
    \[ \Area(\Gamma')<\pi, \quad \mr{dist}_{\Gamma'}(x,\partial \Gamma')\leq \pi, \quad \forall\,x\in \Gamma'.\]
\end{itemize}
Combining with \eqref{eq:stable partial N bound}, we have that for each connected component of $\partial\Omega'\setminus \Sigma$, it is either a smooth surface which is diffeomorphic to $\mb S^2$ or $\mb{RP}^2$ with 
\[  \Area(\Gamma')<2\pi, \quad \mr{diam}_N(\Gamma')\leq \pi;\]
or the union of a free boundary disc $D$ and part of a connected component of $\partial N$ with
\[ \Area<\pi+\frac{4\pi}{3}=\frac{7\pi}{3}, \quad \mr{diam}_N\leq 2\pi+\frac{2\pi}{3}=\frac{8\pi}{3}.\]
Note that $D$ has boundary on a minimal surface with diameter bounded by $\frac{2\pi}{3}$ (see Lemma \ref{lem:area and diameter}).
Then given $\delta>0$, by modifying non-smooth boundary of $\partial \Omega'$ slightly, we can take a region $\wti\Omega$ so that 
\begin{itemize}
    \item $\Omega'\subset \wti\Omega\subset\{x\in \Omega;\dist_N(x,\Omega')<\frac{2\pi}{3}+\delta\}$;
    \item $\partial \wti\Omega\setminus \partial N$ is a smooth surface;
    \item each connected component of $\partial \wti \Omega$ satisfies
    \[ \Area < \frac{8\pi}{3}, \quad \mr{diam}_N<3\pi.\] 
 \end{itemize}
Using \eqref{eq:Omega' range}, we can take $\delta>0$ sufficiently small so that
\begin{equation}\label{eq:wit Omega to Sigma}
    \sup_{x\in \wti\Omega}\dist_N(x,\Sigma)< \frac{7\pi}{3}.
\end{equation}
Denote by $\wti\Omega_0$ the connected component of $\wti\Omega$ that contains $\Sigma$. By Proposition \ref{prop:index one}, there exists a two-sided closed surface $\mc C\subset N_{20\pi}(\wti \Omega)\setminus N_{10\pi}(\wti\Omega)$ with mean curvature vector pointing towards $\wti\Omega$. Clearly, $\mc C$ does not intersect $\Sigma$. Thus, $\mc C$ is either contained in $\Omega$ or disjoint from $\Omega$. Recall that $\Sigma$ is mean concave and $N$ is geometrically prime. Then by Lemma \ref{lem:frankel}, we conclude that $\mc C\subset \Omega$. Now we take a large connected, compact region $\mc K$ satisfying  
\begin{itemize}
\item $\partial \mc K\setminus \partial N$ is a closed surface;
\item $\{x\in N;\dist_N(x,\wti\Omega)\leq \dist_N(\mc C,\wti \Omega)+\mr{diam}_N\mc C+100\pi\}\subset \mc K$.
\end{itemize}
		\begin{figure}[ht]
	\begin{center}
		\def\svgwidth{1\columnwidth}
		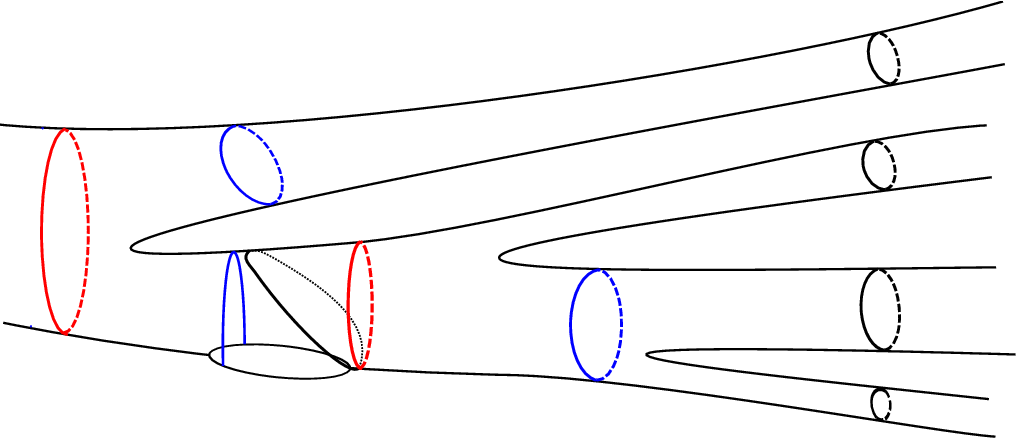
		\caption{Another mean convex surface with bounded size.}
		\label{fig:Another mean convex surface with bounded size}
	\end{center}
\end{figure}
Let $\wti{\mc K}$ be the metric completion of connected component of $\mc K\setminus (\mc C\cup\wti \Omega_0)$ that contains $\Sigma$ and $\mc C$. 
Denote by $S:=\partial \wti{\mc K}\cap \partial \wti{\Omega}_0$; see Figure \ref{fig:Another mean convex surface with bounded size}. 
\begin{claim}\label{claim:hat Sigma connected}
$S$ is connected.
\end{claim}
\begin{proof}[Proof of Claim \ref{claim:hat Sigma connected}]
Suppose not, let $S_1$ and $S_2$ be two connected components. Then they can be joined by a curve in $\wti{\mc K}$. 
By definition, $S_i\subset \wti\Omega_0$, $i=1,2$. Since $\wti\Omega_0$ is connected, then $S_1$ and $S_2$ can be joined by a curve in $\wti \Omega_0$. Thus we conclude that there is a simple closed curve intersecting $S_1$ with algebraic intersection number one.
This contradicts that $S_1$ is two-sided and separates $N$ by Remark \ref{rmk:two-sided separation}. 
\end{proof}
So far, we have proved that
$\partial \wti{\mc K}\setminus \partial N$ has at least two connected components: one is $\mc C$ (mean convex) and the other is $S\subset\partial \wti \Omega_0$. Now we consider the constrained minimizer among all closed surfaces that are homologous to $S$ in $\wti {\mc K}$.
Denote by $\hat \Sigma$ the minimizer. Clearly, $\hat \Sigma$ is a stable minimal surface (with boundary) in the interior of $\wti{\mc K}$.  
\begin{claim}\label{claim:minimizer intersect S}
    $\hat \Sigma$ intersects $S$.
\end{claim}
\begin{proof}[Proof of Claim \ref{claim:minimizer intersect S}]
Denote by $\mk C_0:=\partial \wti{\mc K}\setminus (\mc C\cup S\cup\partial N)$; that is, $\mk C_0$ is the non-minimal boundary of $\wti{\mc K}$ except $\mc C$ and $S$. Since $\wti{\mc K}$ has no closed minimal surfaces, then for each connected component of $\hat \Sigma$, it is either one of $\partial N$ or intersect $S\cup \mk C_0$. Suppose that $\hat \Sigma$ does not intersect $S$. Then $\hat \Sigma\setminus \partial N$ lies in a neighbourhood of $\mk C_0$ by Lemma \ref{lem:diamter for surface with boundary}. This contradicts $\hat \Sigma$ being in the homology class of $S$. 
\end{proof}
Let $\Sigma_1$ be the connected component of $\hat \Sigma$ that intersects $S$. Then $\Sigma_1$ intersects $\partial \wti\Omega\setminus \partial N$. Moreover, $\Sigma_1\setminus \wti\Omega$ is a stable minimal surface. 
Then by Lemma \ref{lem:diamter for surface with boundary}, for any $x,y\in \Sigma_1$,
\[ \dist_N(x,y)\leq \dist_N(x,S)+\dist_N(y,S)+\mr{diam}_NS<5\pi.\]
By the minimizing property of $\hat\Sigma$, we have
\[ \Area(\Sigma_1)\leq \Area(S)<\frac{8\pi}{3}.\]
It remains to prove the last item. Clearly,
\[ \dist_N(\Sigma,\Sigma_1)\geq \dist_N(\Sigma,S)\geq \frac{\pi}{3}.\]
By \eqref{eq:wit Omega to Sigma} and Lemma \ref{lem:diamter for surface with boundary}, for $x\in \Sigma_1$,
\[ \dist_N(x,\Sigma)\leq \dist_N(x,S)+\sup_{y\in S}\dist_N(y,\Sigma)< 3\pi.\]
This completes the proof of Lemma \ref{lem:improved song}.
\end{proof}

\subsection{Proof of decomposition of geometrically prime regions into admissible regions}

Now we can use the three lemmas above to prove Proposition \ref{prop:decompose N without min}.

\begin{proof}[Proof of Proposition \ref{prop:decompose N without min}]
We first prove the lemma for $N$ so that each connected component of $\partial N$ is a stable minimal surface. The proof can be divided into two steps. 

\medskip
{\noindent\bf Step I:} {\em  There exists a sequence of embedded surfaces $\{\Sigma_j\}_{j\geq 1}$ decomposing $N$ into $\{K_j\}_{j\geq 0}$ so that for $j=1,\cdots$,
\begin{itemize}
    \item $\partial K_{j-1}\setminus \partial N=\Sigma_{j-1}\cup\Sigma_{j}$ ($\Sigma_0:=\emptyset$);
    \item in $K_{j-1}$, $\Sigma_{j-1}$ is mean concave and $\Sigma_{j}$ is mean convex;
    \item $\mr{diam}_N\Sigma_j<5\pi$ and $\Area(\Sigma_j)< \frac{8\pi}{3}$;
    \item $\dist_N(\Sigma_j,\Sigma_{j+1})\geq \frac{\pi}{3}$ and  $\sup\{\dist_N(x,\Sigma_j);x\in\Sigma_{j+1}\}<3\pi$ for $j\geq 1$.
\end{itemize}
} 

\begin{proof}[Proof of Step I]
Lemma \ref{lem:improved song} applies to $\Sigma=\emptyset, \Omega=N$ to obtain a two-sided embedded closed mean convex surface $\Sigma_1$ so that 
\begin{itemize}
    \item $\Sigma_1$ is mean convex for a choice of normal vector fields;
    \item $\mr{diam}_N\Sigma_1< 5\pi$ and $\Area(\Sigma_1)< \frac{8\pi}{3}$.
\end{itemize}
Then $\Sigma_1$ separates $N$ into two regions by Remark \ref{rmk:two-sided separation}; denote by $K_0$ the one so that $\Sigma_1$ is mean convex. Then we construct $\Sigma_j$ inductively.

Suppose that we have $\Sigma_j$ and $K_{j-1}$. Note that $\Sigma_{i}$ is the common boundary of $K_{i-1}$ and $K_{i}$, $i=1,\cdots, j-1$. It follows that
\[ \partial (N\setminus \cup_{i=0}^{j-1}K_i)\setminus \partial N=\Sigma_{j}.\]
Then we apply Lemma \ref{lem:improved song} again for $\Omega=N\setminus \cup_{i=0}^{j-1}K_i$ and $\Sigma=\Sigma_j$ to get a two-sided embedded closed mean convex surface $\Sigma_{j+1}$ so that 
\begin{itemize}
    \item $\Sigma_{j+1}$ has mean curvature vector pointing towards $\Sigma_j$;
    \item $\mr{diam}_N\Sigma_{j+1}<5\pi$ and $\Area(\Sigma_{j+1})<\frac{8\pi}{3}$;
    \item $\dist_N(\Sigma_{j+1},\Sigma_j)\geq \frac{\pi}{3}$, $\sup\{\dist_N(x,\Sigma_j);x\in \Sigma_{j+1}\}< 3\pi$.
\end{itemize}
Hence Step I is finished by letting $K_{j}$ be the region bounded by $\Sigma_j$ and $\Sigma_{j+1}$.
\end{proof}

Then by Lemma \ref{lem:2 bdry extension} (with $d_0<3\pi$ therein), there exists a compact region $
\hat K_j\subset K_j$ so that 
\begin{itemize}
    \item $\partial \hat K_j\setminus \partial N$ is a closed embedded surface containing $\Sigma_j$ and $\Sigma_{j+1}$;
    \item $\partial \hat K_j\setminus \partial K_j$ is mean concave;
    \item $\sup\{\dist_N(x,\Sigma_{j+1});x\in \hat K_j\}< 3\pi+3\pi$;
    \item $\Area(\partial \hat K_j\setminus (\Sigma_j\cup\Sigma_{j+1}))<\Area(\Sigma_{j+1})$.
\end{itemize}
Here the third item together with the diameter bound of $\Sigma_{j+1}$ implies that $\mr{diam}_N(\hat K_j)<17\pi$; the last item gives that 
\[ \Area(\partial \hat K_j\setminus \Sigma_{j+1})<\Area(\Sigma_{j+1})+\Area(\Sigma_j)<\frac{16\pi}{3}.\]
Recall that by Remark \ref{rmk:two-sided separation}, each two-sided embedded surface separates $N$. Then $K_j\setminus \hat K_j$ consists of finitely many connected components. Moreover, for each connected component $K'$, $\partial K'\setminus \partial N$ is a connected mean convex surface $\Gamma'$ satisfying
\[ \mr{diam}_N\Gamma'<6\pi, \quad \Area(\Gamma')<\Area(\Sigma_{j+1})\leq \frac{8\pi}{3}.\]

Let $\Omega$ be one of the connected components of $K_j\setminus \hat K_j$ or $K_0$. Denote by $B=\partial \Omega\setminus \partial N$. Then $B$ is a connected mean convex surface with
\[ \mr{diam}_NB<6\pi, \quad \Area(B)< \frac{8\pi}{3}.\]
It suffices to divide $\Omega$ into compact regions as required. 

\medskip
{\noindent\bf Step II:} {\em For each $\Omega$ as above, it can be divided into countably many admissible compact regions $\{R_i\}$.}

\begin{proof}[Proof of Step II]
We prove it by induction. By Lemma \ref{lem:convex extension}, there exists a compact connected region $R_1$ containing $B$ so that 
\begin{itemize}
    \item $\partial R_1\setminus \partial \Omega$ is a closed embedded surface in $N$ and has mean curvature vector pointing away from $\Sigma$;
    \item $\dist_N(\partial R_1\setminus \partial \Omega,B)\geq \frac{\pi}{4}$, and $\sup\{\dist_N(x,B);x\in R_1\setminus \partial \Omega\}\leq 3\pi$;
    \item each connected component of $\partial R_1\setminus \partial \Omega$ has diameter less than $6\pi$;
    \item $\Area(\partial R_1\setminus B)<\Area(B)< \frac{8\pi}{3}$.
\end{itemize}
Observe that for each connected component $\Omega'$ of $\Omega\setminus R_1$, $B':=\partial \Omega'\setminus \partial \Omega$ is a connected mean convex surface and 
\[ \mr{diam}_NB'<6\pi, \quad \Area(B')<\frac{8\pi}{3}.\]
Thus we can repeat the above argument. This finishes Step II.
\end{proof}

We now consider the case when one connected component $\Sigma$ of $\partial N$ has index one.
Then by Lemma \ref{lem:area and diameter},
\[ \mr{diam}_N\Sigma\leq \frac{4}{3}\pi, \quad \Area(\Sigma)\leq \frac{16}{3}\pi.\]
Since $\Sigma$ is unstable, then there exists a neighborhood $\mc N_\delta$ of $\Sigma$ and a diffeomorphism $\phi:\Sigma\times[0,\delta)\rightarrow \mc N_\delta$ so that $\phi(\Sigma\times\{t\})$ has mean curvature vector pointing away from $\Sigma$.
It follows that 
\[ \Area(\phi(\Sigma\times\{\delta\}))<\frac{16}{3}\pi, \quad \mr{diam}_N<2\pi.\]
Thus $\mc N_\delta$ is an admissible region. Let $K$ be the metric completion of $N\setminus \mc N_\delta$. Then by Lemma \ref{lem:convex extension}, there exists an admissible region $\hat K$ containing $\phi(\Sigma\times\{\delta\}) $ so that the metric completion of $K\setminus \hat K$ consists of finitely many regions $\{\hat \Omega_j\}$ satisfying 
\begin{itemize}
    \item $\partial \hat \Omega_j\setminus \partial N$ is a closed embedded surface having mean curvature vector pointing away from $\phi(\Sigma\times\{\delta\})$;
    \item $\mr{diam}_N(\partial \hat \Omega_j\setminus \partial N)<6\pi$ and $\Area(\partial\hat \Omega_j\setminus \partial N)<\frac{7\pi}{3}< \frac{8\pi}{3}$.
\end{itemize}
Then applying the argument in Step II, each $\{\hat \Omega_j\}$ can be divided into countably many admissible regions. Therefore, we conclude that $N$ can be divided into admissible regions.
\end{proof}

\section{Foliation of admissible and prime regions and proof
of Theorem \ref{main}}\label{sec:foliation}
By modifying the mean curvature flow with surgery it was proved in \cite{LM20} that compact geometrically prime regions admit a singular foliation with bounded area. Similarly we will obtain foliations of admissible regions by singular surfaces with area bounds.

\begin{definition}
  Let $A \subset \mathbb{R}$ be a closed interval or a ray $[0, \infty)$.
Given a set $U \subset M$, we say that a family of closed surfaces $\{\Sigma_t \}_{t \in A}$ is a Morse foliation of $U$ if there exists a proper Morse function $f: U \rightarrow A $ with $f^{-1}(t) = \Sigma_t$, $t \in A$.
\end{definition}

Below we recall some basic results about Morse foliations of subsets of a Riemannian manifold.

\begin{lemma} \label{lem:admissible foliation1}
Let $M^3$ be a Riemannian manifold and $\Sigma \subset M$
be a smooth closed surface in $M$.
For every $\eps>0$ there exists $\delta(\Sigma, \eps)>0$,
such that $U = N_{\delta}(\Sigma)$ admits a Morse foliation $\{\Sigma_t\}_{t \in [0,1]}$  with $\Sigma_0 = \partial U$,
$\Sigma_1 = \{p\}$ for a point $p$ in the interior of $U$.
Moreover, we have $\Area(\Sigma_t) \leq 2 \Area(\Sigma)+ \eps$.
\end{lemma}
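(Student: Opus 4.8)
The plan is to produce the foliation by hand in a tubular neighborhood, using the normal exponential map of $\Sigma$. Since $\Sigma$ is a smooth closed surface in $M$, for sufficiently small $\delta_0>0$ the map $\Phi:\Sigma\times(-\delta_0,\delta_0)\to M$ given by $\Phi(x,s)=\exp_x(s\,\nu(x))$ (where $\nu$ is a unit normal field, at least locally; globally one uses a two-sided double cover or works with the normal bundle if $\Sigma$ is one-sided) is a diffeomorphism onto its image. Shrinking if necessary, we may assume $N_\delta(\Sigma)$ is contained in this image and in fact comparable to $\Phi(\Sigma\times(-\delta,\delta))$ for $\delta$ small. On this neighborhood the hypersurfaces $\Sigma_s=\Phi(\Sigma\times\{s\})$ form a smooth foliation with $\Sigma_0=\Sigma$, and by continuity of the area functional (the induced metric on $\Sigma_s$ converges in $C^\infty$ to that of $\Sigma$ as $s\to 0$) we have $\Area(\Sigma_s)\le \Area(\Sigma)+\eps/4$ for $|s|\le\delta$ once $\delta$ is small enough.

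Next I would assemble $U=N_\delta(\Sigma)$ and its Morse foliation. The boundary $\partial U$ is (for small $\delta$, after a tiny perturbation of $\delta$ to a regular value of the distance function) a smooth closed surface which is a small normal graph over $\Sigma_{-\delta}\cup\Sigma_{\delta}$; its area is at most $2\Area(\Sigma)+\eps/2$. I take the function $f$ on $U$ to be essentially the distance to $\partial U$, normalized to map onto $[0,1]$: near $\partial U$ the level sets are the (two) graphs $\Sigma_{\pm s}$, so for $t$ near $0$ each level set $f^{-1}(t)$ has area $\le 2\Area(\Sigma)+\eps/2$; as $t$ increases, the two sheets approach $\Sigma$ and then must merge and collapse. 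Concretely one can build $f$ so that it has exactly one critical point of index $1$ (a saddle) at which the two sheets $\Sigma_{-s},\Sigma_{s}$ join into a single connected surface isotopic to $\Sigma$, and then a single index-$0$ (or index-$3$, depending on orientation) critical point at the center where that surface shrinks to the point $p$. During the merging and collapsing phase the level sets are, up to small error, isotopic copies of $\Sigma$ or of a connected sum of $\Sigma$ with a small handle, hence have area $\le \Area(\Sigma)+\eps/2 \le 2\Area(\Sigma)+\eps$; in the final collapsing phase the level sets are embedded spheres shrinking to $p$, with area tending to $0$, so again bounded by $\Area(\Sigma)+\eps$. Choosing $\delta=\delta(\Sigma,\eps)$ small makes all the error terms fit inside the claimed bound.

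The one genuinely fiddly point — and the step I expect to be the main obstacle — is arranging the topology of the collapse so that $f$ is honestly Morse (finitely many nondegenerate critical points, level sets varying by standard surgeries) while keeping the area of every level set below $2\Area(\Sigma)+\eps$. A tubular neighborhood of a closed surface $\Sigma$ in a $3$-manifold is diffeomorphic to $\Sigma\times(-1,1)$ when $\Sigma$ is two-sided, and to the twisted interval bundle (hence an unorientable handlebody-type piece) when $\Sigma$ is one-sided; in the two-sided case $N_\delta(\Sigma)$ is $\Sigma\times(-1,1)$ and one must foliate a product by surfaces interpolating from $\partial(\Sigma\times(-1,1))=\Sigma\sqcup\Sigma$ down to a point, which genuinely requires critical points — one can realize this with $2\cdot\genus(\Sigma)+2$ critical points by first tubing the two boundary copies together along a short arc (one index-$1$ point, giving a connected $\Sigma$ again after it absorbs both sheets — more carefully: tubing $\Sigma\sqcup\Sigma$ along one arc yields a surface of genus $2\genus(\Sigma)$, and further surgeries reduce it) and then collapsing; the area control in all these moves follows because each move is supported in a region of arbitrarily small volume once $\delta$ is small. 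Rather than track the exact count, I would invoke the general fact that any compact $3$-manifold with boundary admits a Morse function with prescribed boundary behavior, and separately verify the area bound by noting that throughout the isotopy-plus-surgery the level sets stay within a $C^0$-small normal neighborhood of $\Sigma\cup\{\text{finitely many small tubes}\}$, whose total area is $\Area(\Sigma)+\eps/2$ before the final collapse and bounded by $2\Area(\Sigma)+\eps$ at the single moment both sheets are simultaneously present near $\partial U$. I would write this up by first stating the normal-coordinate foliation and its area estimate, then citing a standard existence result for Morse functions on manifolds with boundary (or the analogous construction already used in \cite{LM20}), and finally checking the uniform area bound move by move.
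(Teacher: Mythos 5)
The paper's proof takes a genuinely different route. It sweeps $U$ transversally to $\Sigma$: fix a Morse function $g:\Sigma\to[0,1]$, pull it back via the bundle projection $\pi:U\to\Sigma$, and observe that the level sets $\pi^{-1}(g^{-1}(t))$ are thin slabs of area $O(\delta)<\eps/4$; after a local modification near the critical fibers this becomes a genuine Morse function on $U$ with all fibers of area $<\eps/2$. The paper then invokes Chodosh--Li's Lemma 4.1, which converts any Morse function on a compact $3$-manifold with boundary whose fibers have small area into a Morse foliation running from a point to $\partial U$, with area bounded by $\Area(\partial U)$ plus a multiple of the fiber-area bound, i.e.\ $\le 2\Area(\Sigma)+\eps$. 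The upshot is that the paper never has to track the topology or the areas of the ``merge and collapse'' that your horizontal sweep requires; a black-box doubling lemma does that work.

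Your horizontal plan can be made rigorous (e.g.\ on $\Sigma\times[-1,1]$ take $f(x,s)=(1-s^2)(1+g(x))$ for a Morse function $g$ on $\Sigma$; then $f$ is Morse with critical points exactly $(x_0,0)$, $x_0\in\mathrm{Crit}(g)$, of index $\mathrm{ind}_g(x_0)+1$, with $f^{-1}(0)=\partial(\Sigma\times[-1,1])$, $f^{-1}(2)$ a single point, and every level set a controlled double graph over a sub-level set $\{g\ge c\}\subset\Sigma$, hence of area $\le 2\Area(\Sigma)+O(\delta)$), but the area justification you give is not correct. Once the two sheets are tubed together, the level sets are doubles of subregions of $\Sigma$, with genus as large as $2\genus(\Sigma)$ and area close to $2\Area(\Sigma)$ for essentially the whole collapsing phase --- they are not ``isotopic copies of $\Sigma$ or of a connected sum of $\Sigma$ with a small handle'' of area $\le\Area(\Sigma)+\eps/2$. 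Your fallback --- invoking the general existence of a Morse function with prescribed boundary behavior together with the claim that each surgery is ``supported in a region of arbitrarily small volume'' --- also does not establish the bound: the existence statement carries no area information, and a surface confined to a thin slab can have arbitrarily large area, so small ambient volume of the support of a surgery does not bound the area of the level sets before or after the move. The final constant $2\Area(\Sigma)+\eps$ you state is correct, but your argument for it has a real gap; to close it you need either the explicit double-graph description above, or the paper's transverse slicing plus doubling-lemma shortcut.
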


\begin{proof}
Observe that for an arbitrarily small $r>0$ we can choose
$\delta>0$ sufficiently small, so that $U$ is an interval bundle over $\Sigma$, $\pi: U \rightarrow \Sigma$, and
 for every $p \in \Sigma$ we have that $\pi^{-1}(B_r(p))$ is
$(1+ \frac{\sqrt{\eps}}{1000})$-bilipschitz diffeomorphic 
to $B_r^{\mathbb{R}^2}(0) \times [-\delta, \delta]$. 
In the construction below we will use local coordinates $(x,s)$, $x \in \Sigma$,
$t \in [-\delta, \delta]$, on $U$.

Fix a Morse function $g: \Sigma \rightarrow [0,1]$
and consider a family of surfaces with boundary $\{ \Sigma' _t \}_{t \in [0,1]}$
defined by $\Sigma'_t = (\pi \circ g)^{-1}(t)$.
Observe that for sufficiently small $\delta>0$ we have
that $\Area (\Sigma'_t) < \frac{\eps}{4}$. 
Also, we can slightly modify 
the family near critical points of $g$ to turn it into a family of level sets of a Morse
function on $U$.
Indeed, if $x$ is a critical point of $g$ of index $0$ or $2$ with critical value $g(x) = t'$, then
in the neighbourhood of $\pi^{-1}(x)$
surfaces $\Sigma'_t$ look like a family of cylinders $\{C_t \times [-\delta, \delta]\}$.
It is straightforward to modify this family to a family of concentric
spheres around point $(x,0)$ as $t \rightarrow t'$. Similarly, if $x$ is a critical point of $g$ of index $1$,
then surfaces $\Sigma'_t$ near $(x,0)$ look like a family $\{H_t \times [-\delta, \delta]\}$ 
where $H_t$'s are hyperbolas. Creating a small neck that connects two connected components of
$H_t \times [-\delta, \delta]$ and opening up the neck we modify this family of surfaces in the 
neighbourhood of $\pi^{-1}(x)$ to a family of level sets of Morse function with a cricial point of Morse index $1$.

After these modifications we obtain a Morse function $f:U \rightarrow [0,1] $
with critical points in the interior of $U$ and the area of fibers bounded by 
$\frac{\eps}{2}$. We can now apply \cite{ChLi20}*{Lemma 4.1}
to obtain the desired Morse foliation of $U$.
\end{proof}

\begin{lemma}  \label{lem:admissible foliation2}
        Let $\Omega$ be an admissible region.
Suppose $S_0$ is a closed (possibly empty) subset of 
$\partial \Omega $ and $S_1 = \partial \Omega \setminus S_0$. 
Then for every $\eps>0$ there exists a Morse foliation $\{ \Sigma_t \}$ of $\Omega$, such that
\begin{itemize}
    \item if $S_0$ is empty, then $\Sigma_0 = \{ p\}$ is a point in the 
    interior of $\Omega$ and $\Sigma_1 = S_1$;
    \item if $S_0$ is non-empty, then $\Sigma_0 = S_0$ and $\Sigma_1 = S_1$;
\end{itemize}
and $\Area(\Sigma_t) \leq 3\Area(\partial \Omega) + \eps < 33 \pi$.
\end{lemma}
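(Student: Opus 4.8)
The plan is to construct the foliation by running mean curvature flow inside an ambient geometrically prime region and using the diameter and area control built into the definition of an admissible region, following the surgery scheme of \cite{LM20} and \cite{ChLi20}. First I would set up notation: by Definition \ref{def:admissible regions}, $\partial\Omega$ has a distinguished weakly mean convex component $\Gamma$ and the remaining components $\Gamma_1,\dots,\Gamma_k$ are weakly mean concave, with $\Area(\partial\Omega)\le\frac{32}{3}\pi$ and $\dist_N(x,\Gamma)<6\pi$ for all $x\in\Omega$. Regard $\Omega$ as sitting inside the geometrically prime region $N$, and perturb $\Gamma$ slightly to be strictly mean convex so that mean curvature flow $\{\Gamma(t)\}$ starting at $\Gamma$ moves into $\Omega$ and, by the avoidance/comparison principle together with the assumption that $N\setminus\partial N$ contains no closed minimal surface, sweeps out an open neighborhood of $\Omega$ in finite time. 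The flow may develop singularities and its diameter need not stay bounded, so the core of the argument is to intersect the (possibly singular) level sets of the flow with $\Omega$, discard the parts that leave $\Omega$, and perform surgeries to recover the regularity of level sets of a Morse function, exactly as in \cite{LM20}*{\S 3} and \cite{ChLi20}*{\S 4}.

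The key steps, in order, would be: \textbf{(1)} Handle the two boundary cases. If $S_0=\emptyset$, we must start the foliation at an interior point; here I would first apply Lemma \ref{lem:admissible foliation1} to foliate a small tube $N_\delta(\Gamma')$ around one component $\Gamma'$ of $\partial\Omega$ from a point to $\partial(N_\delta(\Gamma'))$, and then foliate the complement $\Omega\setminus N_\delta(\Gamma')$ — which is again an admissible-type region with the distinguished convex component on its boundary — by the mean curvature flow construction, so $\Sigma_0=\{p\}$ and $\Sigma_1=S_1=\partial\Omega$. If $S_0$ is nonempty we want $\Sigma_0=S_0$ and $\Sigma_1=S_1$; I would start the flow from the mean convex component, run it across $\Omega$ with surgeries so that at the first time it touches all of $\partial\Omega$ the level sets degenerate onto $\partial\Omega$, and then relabel/re-parametrize so that the part of $\partial\Omega$ designated $S_0$ is the start and $S_1$ is the end (using that the surgery produces, near the terminal time, small spheres collapsing onto the mean concave components, which can be foliated by Lemma \ref{lem:admissible foliation1} and concatenated). \textbf{(2)} Carry out the surgery procedure of \cite{ChLi20}*{Lemma 4.1}/\cite{LM20}: approximate the weak (level-set) flow, at each singular time replace necks by pairs of caps, and smooth, obtaining a proper Morse function $f:\Omega\to A$. \textbf{(3)} Prove the area bound. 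Before surgery, the flow is area-nonincreasing, so the swept surfaces have area $\le\Area(\Gamma)\le\frac{16}{3}\pi$; intersecting with $\Omega$ and cutting along $\partial\Omega\setminus\Gamma$ can add at most $\Area(\partial\Omega\setminus\Gamma)\le\Area(\partial\Omega)$ in area (each level set is capped off by pieces of $\partial\Omega$), and the surgery necks/caps contribute an arbitrarily small $\eps$; this gives $\Area(\Sigma_t)\le\Area(\Gamma)+\Area(\partial\Omega)+\eps$. To reach the stated bound $3\Area(\partial\Omega)+\eps$ one observes $\Area(\Gamma)\le\Area(\partial\Omega)$ trivially, and in the point-start case the extra Lemma \ref{lem:admissible foliation1} tube contributes at most $2\Area(\Gamma')+\eps\le 2\Area(\partial\Omega)+\eps$, so altogether $\Area(\Sigma_t)\le 3\Area(\partial\Omega)+\eps$; finally $3\cdot\frac{32}{3}\pi=32\pi<33\pi$.

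\textbf{The main obstacle} is step (2) combined with the diameter issue flagged in the introduction: the mean curvature flow does not respect the diameter bound, and its level sets, once intersected with $\Omega$ and cut along the mean concave boundary components, are only piecewise smooth and may have complicated topology changing at infinitely many times as $t$ ranges over a ray. Making this into a genuine \emph{proper} Morse function — with the fibers over a ray $[0,\infty)$ exhausting $\Omega$ and only finitely many critical points on each compact subinterval — requires care: one must ensure the surgery times do not accumulate on compact time intervals (using that $\Omega$ is compact and the flow has bounded area, so only finitely many surgeries are needed to cross $\Omega$) and that the cutting along $\partial\Omega\setminus\Gamma$ introduces only finitely many new critical points. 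I expect this to be handled, as in \cite{LM20} and \cite{ChLi20}, by working with the \emph{finite} list of mean concave boundary components and the compactness of $\Omega$, but it is the delicate technical heart of the argument; the area estimate in step (3), by contrast, is essentially bookkeeping once the surgery is in place.
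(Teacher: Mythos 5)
Your high-level framework — run mean curvature flow with surgery from the mean convex boundary component, convert to a Morse function using \cite{LM20} and \cite{ChLi20}*{Lemma 4.1}, and patch in tubular-neighbourhood foliations via Lemma \ref{lem:admissible foliation1} — matches the paper. However, there is a genuine gap in how you handle the case $S_0 \neq \emptyset$.

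The problem is that $S_0$ and $S_1$ are an \emph{arbitrary} partition of $\partial\Omega$, completely unrelated to which component is mean convex. For instance, $S_0$ could be a single mean concave component $\Gamma_1$ and $S_1 = \Gamma \cup \Gamma_2 \cup \cdots$. The mean curvature flow naturally produces a foliation running from $\Gamma$ inward, so ``relabel/re-parametrize so that the part designated $S_0$ is the start'' does not make sense: you cannot turn a foliation sweeping $\Gamma \to (\text{rest})$ into one sweeping $\Gamma_1 \to (\Gamma \cup \Gamma_2)$ by relabeling. The paper's solution is a specific combinatorial gluing that you have not captured. It first produces (via MCF + \cite{ChLi20}*{Lemma 4.1}) a Morse foliation $\{\Sigma'_t\}$ of the shrunken region $\Omega' = \Omega\setminus U$ running from a \emph{point} to $\partial\Omega'$. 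It then foliates the $S_0$-tube $\tilde U$ by parallel surfaces $\tilde\Gamma^0_t$ from $S_0$ inward, foliates the $S_1$-tube by $\tilde\Gamma^1_t$ from the inner boundary out to $S_1$, and — crucially — applies Lemma \ref{lem:admissible foliation1} to the \emph{half-tube} $V$ bounded by $\tilde\Gamma^0_{1/2}$ and $\tilde\Gamma^0_{1}$ to produce a foliation $\tilde\Sigma_t$ from $\tilde\Gamma^0_{1/2} \cup \tilde\Gamma^0_1$ down to a point. The final foliation is the piecewise concatenation $\Sigma_t = \tilde\Gamma^0_t$, then $\Sigma'_{4t-1}\cup\tilde\Gamma^0_t$, then $\tilde\Sigma_{4t-2}\cup\tilde\Gamma^1_{2t-1}$, then $\tilde\Gamma^1_{2t-1}$. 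The half-tube $V$ is what lets the $S_0$-side ``turn around'' and disappear while the $S_1$-side opens up; without it, there is no way to begin at $S_0$ and end at $S_1$.

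Your treatment of the $S_0 = \emptyset$ case is also circular: you propose foliating a tube $N_\delta(\Gamma')$ around one boundary component from a point to its full boundary, and then foliating $\Omega\setminus N_\delta(\Gamma')$ — but that remainder region, with the inner tube boundary as one end and $\partial\Omega\setminus\Gamma'$ as the other, is precisely an instance of the $S_0\neq\emptyset$ problem that you have not yet solved. The paper's handling is simpler: $\{\Sigma'_t\}$ already goes from a point to $\partial\Omega'$, so one just extends it through the tube $U$ by the parallel-surface foliation to reach $\partial\Omega = S_1$. Finally, your picture of ``capping off'' the truncated MCF level sets with pieces of $\partial\Omega$ is not what \cite{ChLi20}*{Lemma 4.1} does; that lemma takes the Morse function whose fibers have boundary on $\partial\Omega'$ and converts it to one with closed fibers at the cost of at most $\Area(\partial\Omega')$, which is where the second $\Area(\partial\Omega)$ in the $3\Area(\partial\Omega)+\eps$ bound comes from (the third comes from the simultaneous tube surface $\tilde\Gamma^0_t$ or the half-tube foliation $\tilde\Sigma_t$).
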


\begin{proof}
    Let $U$ denote a small tubular neighbourhood of 
    $\partial \Omega$ in $\Omega$ and $\Omega' = \Omega \setminus U$.
    Note that each connected component $U_i$ of $U$ can be foliated by
    a family of surfaces $\{ \Gamma^i_t\}$, where $\Gamma^i_0= \Gamma^i$ is a connected component of
    $\partial \Omega$ and $\Area(\Gamma^i_t) \leq  \Area(\Gamma^i) + \delta $
    for $\delta>0$ that can be chosen arbitrarily small by making neighbourhood $U$
    small enough.

    Consider $\Omega$ as a subset of $M$ and define Mean Curvatue Flow with surgery starting on the mean convex
    boundary component $S$ of $\partial \Omega$. As in \cite{LM20} we can
    modify this family of surfaces to obtain a Morse function 
    $f': \Omega \rightarrow \mathbb{R}$ with the area of fibers at most 
    $\Area (S) + \delta$. (Note that some of the fibers
    may not be closed surfaces and may have boundary in $\partial \Omega \setminus S$.)
    We can also assume that $f'$ is a Morse function
    when restricted to $\Omega'$ and $\partial \Omega'$.
    By \cite{ChLi20}*{Lemma 4.1} there exists a Morse foliation $\{\Sigma'_t\}$ of $\Omega'$
    starting at a point $\Sigma'_0 = \{p\}$ in the interior of $\Omega'$, ending on $\Sigma'_1 = \partial \Omega'$
    and satisfying $\Area ( \Sigma'_t) \leq \Area (S) + \Area (\partial \Omega') + 2\delta $.

If $S_0$ is empty then we can extend  $\{\Sigma'_t\}$ to $U$ to obtain the desired Morse foliation of $\Omega$.

Suppose $S_0$ is not empty. Let $\tilde U$ denote the union of connected components
of $U$ that intersect $S_0$ and $\{ \tilde \Gamma_t^0 \}_{t \in [0,1]}$ denote the foliation of 
$\tilde U$ by surfaces isotopic to $S_0$, as defined above.
Let $\{ \tilde \Gamma_t^1 \}_{t \in [0,1]}$ denote the corresponding foliation
of $U \setminus \tilde U$.
Let $V$ denote a subset of $\tilde U$ bounded by $\tilde \Gamma_{\frac{1}{2}}^0$
and $\tilde \Gamma_{1}^0$.
By Lemma \ref{lem:admissible foliation1} we can define 
a Morse foliation $\{\tilde \Sigma_t \}_{t \in [0,1]}$ of
$V$ that starts on $\tilde \Gamma_{\frac{1}{2}}^0 \cup \tilde \Gamma_{1}^0$ and terminates
at a point in the interior of $V$.

We can now define the desired Morse foliation of $\Omega$ by setting 
\begin{equation*}
\Sigma_t=\left\{
\begin{aligned}
  & \tilde \Gamma_t^0& \text{ for $t \in [0,\frac{1}{4})$},\\
&\Sigma'_{4t-1} \cup \tilde \Gamma_t^0& \text{ for $t \in [\frac{1}{4}, \frac{1}{2})$},\\
&\tilde \Sigma_{4t-2} \cup \tilde \Gamma_{2t-1}^1& \text{ for $t \in [\frac{1}{2}, \frac{3}{4})$},\\
&  \tilde \Gamma_{2t-1}^1& \text{  for $t \in [\frac{3}{4}, 1]$}.
  \end{aligned}\right.
\end{equation*}
It is straightforward to check that with the definition above we have $\Sigma_0 = S_0$, $\Sigma_1= S_1$
and the area bound is satisfied.
\end{proof}

\begin{proof}[Proof of Theorem \ref{main}]
Let $M$ be a complete $3$-manifold with scalar curvature $R \geq 6$.
We perturb the metric on $M$ to a bumpy metric. We will construct the
desired Morse foliation for the perturbed metric and note that the diameter 
and area bounds for surfaces in the folation will be worse by at most $\eps$
for the original metric, where $\eps>0$ depends on the perturbation and
can be chosen arbitrarily small.

By Theorem \ref{prime decomposition} and Proposition \ref{prop:decompose N without min}
there exists a decomposition of $M$ into countably many admissible regions $\mathcal{U} = \{\Omega_i\}$.
Define graph $G$, where each vertex $V_{\Omega_i}$ corresponds to an admissible region
$\Omega_i$ and two vertices $V_{\Omega_i}$ and $V_{\Omega_j}$, $i \neq j$, are connected by an edge if
and only if the inclusions of $\Omega_i$ and $\Omega_j$ share a common boundary component in $M$.

Choose an admissible region $\Omega_0 \in \mathcal{U}$. 
Let $i_{\Omega_0}:\Omega_0 \rightarrow M$ denote the inclusion of $\Omega_0$
into $M$ that is a Riemannian isometry in the interior of $\Omega_0$.
Let $S_1(\Omega_0)$ denote the image of the boundary components of $\Omega_0$ 
on which $i_{\Omega_0}$ is a 2-to-1 map (for example, if 
$i_{\Omega_0}$ sends the interior of
$\Omega_0$ onto the interior of $\mathbb{RP}^3$ with some balls and nontrivial $\mathbb{RP}^2$
removed).
By Lemma \ref{lem:admissible foliation2}, for a sufficiently small $\delta>0$,
there exists a Morse function $f: \Omega_0 \setminus N_{\delta}(\partial \Omega_0) \rightarrow [0,1-\delta]$, such that $f^{-1}(0)$ is a point in the interior of $\Omega_0$ and $f^{-1}(1) = \partial (\Omega_0 \setminus N_{\delta}(\partial \Omega_0))$ and the fibers of $f$
satisfy the desired area and diameter bounds. We can extend $f$ to a Morse foliation of
$i_{\Omega_0}(\Omega_0)$, $f:i_{\Omega_0}(\Omega_0) \rightarrow [0,1]$, by applying Lemma \ref{lem:admissible foliation1} in the $\delta$-neighbourhood
of $S_1(\Omega_0)$ and foliating the rest of $i_{\Omega_0}(N_\delta(\partial \Omega_0))$ by level sets of the distance function.

Let $V_n$ denote the collection of all vertices in graph $G$ that are
at a graph distance $n$ from vertex $V_{\Omega_0}$ and let $U_n$ denote closure of the
union of the corresponding regions in $M$. Suppose, by induction,
that we have defined a Morse function $f: \bigcup_{k=0}^{n-1} U_k \rightarrow [0, n]$
with fibers of the function satisfying the desired area and diameter bounds and, moreover,
we have that $f(x) = n$ for $x \in \partial \bigcup_{k=0}^{n-1} U_k$.

For each admissible region $\Omega'$ corresponding to a vertex in $V_n$ we decompose $\partial \Omega'$
into four closed subsets. 
Let $S_0(\Omega') = \partial i_{\Omega'}({\Omega'}) \cap \partial U_{n-1}$, 
$S_1(\Omega')$ denote the connected components of $i_{\Omega'}( \partial \Omega')$
on which $i_{\Omega'}$ is a double covering,
$S_2(\Omega') = \partial i_{\Omega'}({\Omega'})\cap \partial (U_n \setminus i_{\Omega'}({\Omega'}))$,
and
$S_3(\Omega') = \partial i_{\Omega'}({\Omega'}) \setminus (S_0(\Omega') \cup S_1(\Omega') \cup S_2(\Omega'))$.
Note that $S_1$, $S_2$ and $S_3$ may be empty.

For a sufficiently small $\delta>0$ we can apply Lemma \ref{lem:admissible foliation2}
to define $f$ on $\bigcup i_\Omega(\Omega) \setminus N_{\delta}(S_1(\Omega) \cup S_2(\Omega) \cup S_3(\Omega))$, where the union is over all sets $\Omega \in \mathcal{U}$ correspond to
vertices in $V_n$, mapping it onto
$[n, n+1 - \delta]$, so that it agrees with the inductive definition of $f$ on $f^{-1}(n)$ and
$f^{-1}(n+1 - \delta) = \partial \bigcup N_{\delta}(S_1(\Omega) \cup S_2(\Omega) \cup S_3(\Omega)) $. 
We can then apply Lemma \ref{lem:admissible foliation1} to extend $f$ on 
$\bigcup N_\delta(S_1(\Omega) \cup S_2(\Omega)) $, mapping it to $[n+1 - \delta, n+1]$.
Finally, the set $\bigcup N_\delta(S_3(\Omega)) \cap \Omega$ can be foliated by surfaces
isotopic to $\bigcup S_3(\Omega)$, giving us a Morse foliation with $f= n+1$ on $\partial \bigcup_{k=0}^{n} U_k$. This finishes the inductive construction of $f$.

It follows from the definition of admissible regions
and the area and diameter estimates above
that for each connected component $\Sigma$ of $f^{-1}(x)$ we have $\Area(\Sigma) < 33 \pi$.
Since each surface is contained inside an admissible region we have that
the extrinsic diameter satisfies $\diam_M\Sigma < 18 \pi$.
\end{proof}

\bibliographystyle{amsalpha}
\bibliography{width}
\end{document}